\def\@biblabel#1{#1}
\newtheorem{theorem}{Theorem}[section]
\newtheorem{lemma}[theorem]{Lemma}
\theoremstyle{definition}
\theoremstyle{proposition}
\newtheorem{proposition}[theorem]{Proposition}
\theoremstyle{remark}
\theoremstyle{example}
\theoremstyle{corollary}
\date{}
\def\comp{\ensuremath\mathop{\scalebox{.6}{$\circ$}}}
\begin{document}

\title{Crystal Structure on the Category of Modules over Colored Planar Rook Algebra}

\author{Bin Li \\
\tiny{School of Mathematics and Statistics, Wuhan University, P.R. China} \\
\tiny{libin117@whu.edu.cn}}

\maketitle

\begin{abstract}
Colored planar rook algebra is a semigroup algebra in which the basis element has a diagrammatic description.
The category of finite dimensional modules over this algebra is completely reducible and suitable functors
are defined on this category so that it admits a crystal structure in the sense of Kashiwara. We show that
the category and functors categorify the crystal bases for the polynomial representations of quantized enveloping algebra $U_q(gl_{n+1})$.
\end{abstract}

\section{Introduction}

Let $gl_{n+1}(\mathbb C)$ be the general Linear Lie algebra and let $\mathfrak h$ be the Cartan subalgebra of $gl_{n+1}(\mathbb C)$
 consisting of all diagonal $(n+1)\times(n+1)$ matrices over $\mathbb C$. Take $\epsilon_i\in\mathfrak h^{\ast}$ such that $\epsilon_i(E_{jj})=\delta_{ij}$ for 
 $1\leqslant i\leqslant n+1$. 
 We denote by $P$ and $Q$ the weight lattice and the root lattice respectively, i.e.
 \begin{align*}
 P&=\{\mu=\sum_{i=1}^{n+1}\mu_i\epsilon_i\ |\ \mu_i\in\mathbb Z\},\\
 Q&=\{\sum_{i=1}^{n}\mu_i(\epsilon_i-\epsilon_{i+1})\ |\ \mu_i\in\mathbb Z\}.
 \end{align*}
A set $B$ with maps $(wt, \tilde{e}_i, \tilde{f}_i,
\varepsilon_i, \varphi_i)$ 
\begin{align*}
wt&:\ B\longrightarrow P,\\
\varepsilon_i,\ \varphi_i&:\ B \longrightarrow \Bbb{Z} \cup \{-\infty \},\\
\tilde{e}_i,\ \tilde{f}_i&:\ B\longrightarrow B\cup \{0\} 
\end{align*}
for $1\leqslant i\leqslant n$, is called a $gl_{n+1}(\mathbb C)$-crystal if the following axioms are satisfied:
\begin{itemize}
\item[(a)]
For $b\in B,$ $\varphi_i(b)=\varepsilon_i(b)+
wt(b)(E_{ii}-E_{i+1,i+1}).$
\item[(b)]
For $b\in B$ with
$\tilde{e}_ib\in B$, $wt(\tilde{e}_ib)=wt(b)+\epsilon_i-\epsilon_{i+1}$;\\
For $b\in B$ with $\tilde{f}_ib\in B,\
wt(\tilde{f}_ib)=wt(b)-\epsilon_i+\epsilon_{i+1}.$
\item[(c)]
For $b_1, b_2\in B$,
$\tilde{f}_ib_2=b_1$ if and only if $\tilde{e}_ib_1=b_2$.
\item[(d)]
If $\varepsilon_i(b)=-\infty$,
then $\tilde{e}_ib=\tilde{f}_ib=0.$
\end{itemize}
The crystal graph associated to $B$ is a graph with the set of vertices $B$ and there is an arrow colored by $i$ from $b_1\in B$ to $b_2\in B$
if $\tilde{f}_i(b_1)=b_2$. In a series of works\cite{Kashiwara:1990, Kashiwara:1991, Kashiwara:1993, Kashiwara2:1993, Kashiwara:1994}, Kashiwara developed the crystal base theory to study the representation and the structure of
the quantized enveloping algebra $U_q(\mathfrak g)$ associated to a Kac-Moody algebra $\mathfrak g$. For any irreducible highest weight
integrable $U_q(\mathfrak g)$-module $V$, Kashiwara found a basis for $V$ which admits a $\mathfrak g$-crystal structure when $q=0$. 
Take $\mathfrak g=gl_{n+1}(\mathbb C)$ as an example. If $V_1$ and $V_2$ are polynomial $U_q(gl_{n+1})$-modules
with crystal bases $B_1$ and $B_2$, it is known that
$V_1\otimes V_2$ has a crystal basis $B_1\otimes B_2=\{b_1\otimes b_2\ |\ b_i\in B_i\}$ with
\begin{align*}
\tilde{e_i}(b_1\otimes b_2)&=
\begin{cases}
\tilde{e}_i(b_1)\otimes b_2& \text{if $\varphi_i(b_1)\geqslant\varepsilon_i(b_2)$},\\
b_1\otimes\tilde{e}_i(b_2)& \text{if $\varphi_i(b_1)<\varepsilon_i(b_2)$},
\end{cases}\\
\tilde{f_i}(b_1\otimes b_2)&=
\begin{cases}
\tilde{f}_i(b_1)\otimes b_2& \text{if $\varphi_i(b_1)>\varepsilon_i(b_2)$},\\
b_1\otimes\tilde{f}_i(b_2)& \text{if $\varphi_i(b_1)\leqslant\varepsilon_i(b_2)$},
\end{cases}\\
wt(b_1\otimes b_2)&=wt(b_1)+wt(b_2),\\
\varepsilon_i(b_1\otimes b_2)&=\textrm{max}\{\varepsilon_i(b_1),\varepsilon_i(b_2)-wt(b_1)(E_{ii}-E_{i+1,i+1})\},\\
\varphi_i(b_1\otimes b_2)&=\textrm{max}\{\varphi_i(b_2),\varphi_i(b_1)+wt(b_2)(E_{ii}-E_{i+1,i+1})\}.
\end{align*}
Moreover, the decomposition of $V_1\otimes V_2$ into a direct sum of irreducible submodules is compatible with the decomposition
of $B_1\otimes B_2$ into connected components. Hence crystal basis theory works as an important combinatorial tool to study the 
representation theory of the quantized enveloping algebra. 

Various realizations of the crystal bases are also considered by mathematicians such as Young tableaux realization, Littelmann's path model or geometric
models \cite{HK, Kashiwara:2003, Littelmann1, Littelmann2, Savage}. In this 
paper, by studying the colored planar rook algebra and its representations, we realize the highest weight crystal basis $B(\lambda)$ for irreducible
 polynomial $U_q(\mathfrak g)$-module $V(\lambda)$. The colored planar rook algebra $\mathbb CP_m$ is first defined and studied by Mousley, Schley and Shoemaker
 in \cite{MSS} to give an alternative proof of the recursive formula for multinomial coefficients
 \begin{equation*}
 \binom{m}{m_0,m_1,\cdots,m_n}=\sum_{i=0}^{n}\binom{m-1}{m_0,\cdots,m_i-1,\cdots,m_n}.
 \end{equation*}
They describe  all irreducible finite dimensional modules over the colored planar rook algebra $\mathbb CP_m$ in \cite{MSS} which generalizes the work
of Flath, Halverson and
Herbig\cite{FHK}. The above equality is 
 proved then by taking the dimension of certain $\mathbb CP_m$-module which restricts to a $\mathbb CP_{m-1}$-module through an embedding
 \begin{equation*}
 \mathbb CP_{m-1}\longrightarrow \mathbb CP_m. 
 \end{equation*} 
 Their work inspires the author to consider other sorts of embeddings from $\mathbb CP_{m-1}$ to $\mathbb CP_{m}$ which help to define suitable functors
 on the category of $\mathbb CP_m$-modules and finally realize the crystal. 
 
We organize the paper as follows. In section 2, we recall the definition of the the colored planar rook algebra and summarize the main 
results in \cite{MSS}. In section 3, restriction and induction functors are defined through various embeddings $\mathbb CP_{m-1}\longrightarrow
\mathbb CP_{m-1}$ and their properties are investigated. In the last section, we show that the category of finite dimensional $\mathbb CP_m$-modules
categorifies the crystal $B(m\epsilon_1)$, and furthermore, by studying the representations of the tensor product 
$\mathbb CP_{\lambda_1}\otimes\cdots\otimes\mathbb CP_{\lambda_k}$, we realize the crystal $B(\lambda)$.

\section{Colored Planar Rook Algebra}

We fix a positive integer $n$ and denote by $\mathbb Z_2^n$ the direct product of $n$ copies of the ring $\mathbb Z_2$. Take $u_i\in\mathbb Z_2^n$ whose $i^{th}$ component is 1 and the others are 0. Given a positive integer $m$, let $P_m^n$, or $P_m$ for simplicity, denote the set consisting of all $m\times m$ matrices over $\mathbb Z_2^n$ which satisfies the following conditions,
\begin{itemize}
\item[(a)] there is at most one none-zero element from the set $\{u_i\ |\ 1\leqslant i\leqslant n\}$ in each row and each column.
\item[(b)] there is no $2\times 2$ submatrix of the following form for any $1\leqslant i\leqslant n$,
\begin{displaymath}
\left(\begin{array}{cc}
0 & u_i  \\
u_i& 0\\
\end{array} \right).
\end{displaymath}
\end{itemize}
For example, the matrix $B$ below is an element in $P_5$,
\begin{displaymath}
B=\left(\begin{array}{ccccc}
0 & u_1 & 0  & 0  & 0 \\
u_2 & 0 & 0  & 0  & 0 \\
0 & 0 & u_1 & 0  & 0 \\
0 & 0 & 0 & 0 & u_2 \\
0 & 0 & 0 &  0 &  0 \\
\end{array} \right).
\end{displaymath}
It can be easily checked that $P_m$ is a semigroup under matrix multiplication. To each element $A=(a_{ij})_{m\times m}$ in $P_m$, one can associate a diagram  consisting of two rows of $m$ vertices, where the $i^{th}$ vertex in the top row and the $j^{th}$ vertex in the bottom row are connected by an edge colored with $1\leqslant k\leqslant n$  if $a_{ij}=u_k$. Thus the diagram corresponding to $B$ above is as follows, where  red and blue are used for the color $1$ and $2$ respectively.
\begin{figure}[h]
\centering
\includegraphics[width=36mm]{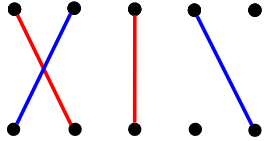}
\caption{the planar diagram associated to $B$}
\end{figure}

It is required by (b) that the edges with the same color cannot cross in such so-called \textit{planar} diagrams. We also mention that given 2 planar diagrams $d_1$ and $d_2$, the product $d_1d_2$ is the diagram  obtained by stacking $d_1$ on top of $d_2$, and then removing the concatenation of 2 edges with different colors or the edges connecting to isolated vertices. See that $d_1d_2$ is still planar and the associated matrix is exactly the product $D_1D_2$ of two matrices where $D_i\in P_m$ corresponds to the diagram $d_i$ for $i=1, 2$. Thus we do not distinguish the matrices in $P_m$ and the planar diagrams hereafter.

The \textit{n colored planar rook algebra} $\mathbb CP_m^n$, or $\mathbb CP_m$, is then defined to be the semigroup algebra  associated to $P_m$, i.e. it consists of all formal expressions
 \begin{equation*}
 \sum_{d\in P_m}a_dd
 \end{equation*}
 where $a_d\in\mathbb C$
 and the multiplication is given by linearly extending the product in $P_m$. Note that there is an embedding of algebras from $\mathbb CP_{m_1}\otimes \mathbb CP_{m_2}$ to $\mathbb CP_{m_1+m_2}$ through which, for $d_i\in P_{m_i}$, we regard  $d_1\otimes d_2$ as the planar diagram in $P_{m_1+m_2}$ by putting $d_1$ and $d_2$ next to each other. For $1\leqslant i\leqslant n$, let $I_i$ denote the diagram in $P_1$ consisting of two vertices connected by an edge colored $i$ and let $I_0\in P_1$ be the single pair of isolated vertices. Then the identity element in $\mathbb CP_m$ is
 \begin{equation*}
 e_m=\underbrace{e_1\otimes\cdots \otimes e_1}_{m},
 \end{equation*}
where $e_1=\sum_{i=1}^{n}I_i-(n-1)I_0\in\mathbb CP_1$.

In this paper, we consider only (left) $\mathbb CP_m$-modules, or called representations
of $\mathbb CP_m$ which are unital, i.e. given a $\mathbb CP_m$-module $M$, $e_mx=x$ for
all $x\in M$. To decompose the regular representation of $\mathbb CP_m$, a new basis for
the algebra is given as the following
\begin{equation*}
\{x_d=\sum_{d^{\prime}\subseteq d}(-1)^{size(d)-size(d^{\prime})}d^{\prime}\in\mathbb CP_m\  |\ d\in P_m \},
\end{equation*}
where $d^{\prime}\subseteq d$ means $d^{\prime}$ is a sub-diagram of $d$ obtained by deleting
 some of the edges in $d$ and $size(d)$ denotes the number of edges in $d$.

For $d\in P_m$ and $1\leqslant i\leqslant n$, let $\tau_i(d)$ (resp. $\beta_i(d)$) be the set
 of all vertices in the top (resp. bottom) row of $d$ which is connected by an edge
 colored $i$. Also we denote by $\tau_0(d)$ (resp. $\beta_0(d)$) the set of all
 isolated top (resp. bottom) vertices in $d$. Set $\tau(d)=(\tau_0(d), \tau_1(d), \cdots, \tau_n(d))$
  and $\beta(d)=(\beta_0(d), \beta_1(d), \cdots, \beta_n(d))$. Taking $B\in P_5^2$ in
  Figure 1 for example, one has
\begin{equation*}
\tau(B)=(\{5\},\{1,3\},\{2,4\}),\quad \beta(B)=(\{4\},\{2,3\},\{1,5\}).
\end{equation*}
Let $X_m=\{\tau(d)\ | \ d\in P_m \}$. For $T=(T_{0}, \cdots, T_{n}), S=(S_{0}, \cdots, S_{n})\in X_m$,
 we say that $T\subseteq S$ if $T_{i}\subseteq S_{i}$ for all $1\leqslant i\leqslant n$.
\begin{proposition}(\cite{MSS})~\label{product}
For $d, d^{\prime}\in P_m$,
\begin{align*}
d^{\prime}\cdot x_d&=
\begin{cases}
x_{d^{\prime}d}& \text{if $\tau(d)\subseteq\beta(d^{\prime})$},\\
0 & \text{otherwise},
\end{cases}\\
x_{d^{\prime}}\cdot d&=
\begin{cases}
x_{d^{\prime}d}& \text{if $\beta(d^{\prime})\subseteq\tau(d)$},\\
0 & \text{otherwise},
\end{cases}\\
x_{d^{\prime}}\cdot x_d&=
\begin{cases}
x_{d^{\prime}d}& \text{if $\beta(d^{\prime})=\tau(d)$},\\
0 & \text{otherwise}.
\end{cases}
\end{align*}
\end{proposition}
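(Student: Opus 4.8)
\emph{Proof strategy.}
The plan is to reduce all three formulas to one fact about how diagrams multiply, after which each becomes a short inclusion--exclusion. For a diagram $a$ write $E(a)$ for its set of edges and, for $S\subseteq E(a)$, write $a_{(S)}$ for the subdiagram keeping exactly the edges in $S$, so that $x_a=\sum_{S\subseteq E(a)}(-1)^{|E(a)|-|S|}a_{(S)}$. First I would record the following description of the product: if $a$ sits on top of $b$, an edge of $b$ with top endpoint $v$ and color $k$ \emph{survives} in $ab$ precisely when $v\in\beta_k(a)$, in which case it merges with the unique color-$k$ edge of $a$ at $v$ to form an edge of $ab$ of the same color, while every other edge of $a$ and of $b$ disappears. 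Since each column of a diagram carries at most one edge (axiom (a)), distinct edges of $b$ have distinct bottom endpoints, so the assignment sending each surviving edge of $b$ to its image in $ab$ is injective and identifies the surviving edges of $b$ with $E(ab)$; in particular, for any $S\subseteq E(b)$ the diagram $a\,b_{(S)}$ is the subdiagram of $ab$ on the images of the surviving edges in $S$.

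For the first formula, expand $d'\cdot x_d=\sum_{S\subseteq E(d)}(-1)^{|E(d)|-|S|}\,d'd_{(S)}$ and call an edge of $d$ of color $k$ with top endpoint $v$ \emph{good} if $v\in\beta_k(d')$ (so that it survives in $d'd$) and \emph{bad} otherwise; let $G$ be the set of good edges. By the previous paragraph $d'd_{(S)}=d'd_{(S\cap G)}$ for every $S$, and $T\mapsto d'd_{(T)}$ is a bijection from the subsets $T\subseteq G$ onto the subdiagrams of $d'd$. Splitting $S$ into its parts inside and outside $G$, the sum factors, and the factor coming from the bad edges is $\sum_{S''\subseteq E(d)\setminus G}(-1)^{|E(d)\setminus G|-|S''|}=(1-1)^{|E(d)\setminus G|}$. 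This vanishes unless there are no bad edges, that is, unless $\tau_k(d)\subseteq\beta_k(d')$ for all $k$, which is exactly $\tau(d)\subseteq\beta(d')$; and when there are no bad edges the remaining factor equals $\sum_{f\subseteq d'd}(-1)^{size(d'd)-size(f)}f=x_{d'd}$. The second formula is the mirror image: one repeats the argument with ``the bottom endpoint of an edge of $d'$ lies in $\tau_k(d)$'' in place of ``good'', or else applies the anti-automorphism of $\mathbb CP_m$ that reflects every diagram top-to-bottom, which sends $x_d$ to the basis element indexed by the reflected diagram and interchanges $\tau$ with $\beta$.

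For the third formula I would expand only the left factor, $x_{d'}\cdot x_d=\sum_{d''\subseteq d'}(-1)^{size(d')-size(d'')}\,d''\cdot x_d$, and feed each term to the first formula: $d''\cdot x_d$ equals $x_{d''d}$ when $\tau(d)\subseteq\beta(d'')$ and $0$ otherwise. If $\tau(d)\not\subseteq\beta(d')$ no term survives. Otherwise the surviving $d''$ are exactly those containing the \emph{mandatory} edge set $M$, which for each $k$ consists of the color-$k$ edge of $d'$ at every bottom vertex lying in $\tau_k(d)$; the edges $R=E(d')\setminus M$ may be kept or not. By injectivity of the image map once more, for such a $d''$ only the edges of $M$ survive in $d''d$, so $d''d=d_{(M)}d$ regardless of which subset $R'\subseteq R$ was kept; hence $x_{d'}\cdot x_d=x_{d_{(M)}d}\sum_{R'\subseteq R}(-1)^{|R|-|R'|}=x_{d_{(M)}d}\,(1-1)^{|R|}$, which is $0$ unless $R=\emptyset$. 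Finally $R=\emptyset$ together with $\tau(d)\subseteq\beta(d')$ forces $\tau(d)=\beta(d')$, in which case $M=E(d')$, the only admissible $d''$ is $d'$ itself, and $d_{(M)}d=d'd$, so $x_{d'}\cdot x_d=x_{d'd}$.

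I expect the one genuine obstacle to be establishing the injectivity assertion of the first paragraph cleanly --- equivalently, that multiplying a subdiagram of $d$ by a fixed diagram again yields a genuine subdiagram of the full product, with the resulting correspondence on edges a bijection. This rests entirely on axiom (a), that each row and each column carries at most one marked entry; granting it, the three identities amount to $(1-1)^{N}$ bookkeeping.
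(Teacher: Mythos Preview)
The paper does not actually prove this proposition: it is quoted from \cite{MSS} and stated without argument, so there is no in-paper proof to compare against.

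Your argument is correct and complete. The key ingredients --- that removing a non-surviving edge from a subdiagram does not change the product, and that surviving edges of the bottom (respectively top) factor are in bijection with the edges of the product --- are exactly what is needed, and you justify them properly from axiom~(a). The factorization of the alternating sum into a ``good'' part and a $(1-1)^{|\text{bad}|}$ part is the standard M\"obius/inclusion--exclusion mechanism and is carried out cleanly in all three cases. The reduction of the second formula to the first via the anti-involution $\omega$ is also fine (the paper records the relevant property $\omega(x_d)=x_{\omega(d)}$ in (\ref{omega})).

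One cosmetic slip: in the third paragraph you write $d_{(M)}d$ twice where you mean $d'_{(M)}d$, since $M\subseteq E(d')$. With that corrected, the computation
\[
x_{d'}\cdot x_d
=\sum_{R'\subseteq R}(-1)^{|R|-|R'|}\,x_{d'_{(M)}d}
=(1-1)^{|R|}\,x_{d'_{(M)}d}
\]
goes through exactly as you intend, and $R=\emptyset$ together with $\tau(d)\subseteq\beta(d')$ indeed forces $\beta(d')=\tau(d)$ and $d'_{(M)}=d'$.
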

For $T\in X_m$, let $W^m_T$ denote the subspace of $\mathbb CP_m$ spanned
 by all $x_d$ with $\beta(d)=T$. The following theorem follows easily from
Proposition~\ref{product}.
\begin{theorem}(\cite{MSS})~\label{semisimple}
\begin{itemize}
\item[$(1)$]For $T\in X_m$, $W^m_T$ is an irreducible $\mathbb CP_m$-module.
 Moreover, $\mathbb CP_m$ is semisimple, i.e. the regular module
  $\mathbb CP_m=\oplus_{T\in X_m}W^m_T$.
\item[$(2)$] For $T, T^{\prime}\in X_m$, $W^m_T\cong W^m_{T^{\prime}}$
 as $\mathbb CP_m$-modules if and only if $|T_i|=|T_i^{\prime}|$ for all $0\leqslant i\leqslant n$.
\end{itemize}
\end{theorem}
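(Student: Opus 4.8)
The plan is to read the module structure of each $W^m_T$ straight off Proposition~\ref{product}, after first making the basis of $W^m_T$ explicit. The key preliminary observation is that a planar diagram $d\in P_m$ is determined by the pair $(\tau(d),\beta(d))$: inside a fixed colour $i$ the colour-$i$ edges form a perfect matching between $\tau_i(d)$ and $\beta_i(d)$, these sets are equinumerous, and condition (b) forces this matching to be non-crossing, hence to join the $p$-th smallest element of $\tau_i(d)$ to the $p$-th smallest element of $\beta_i(d)$. Conversely, for any $S,T\in X_m$ with $|S_i|=|T_i|$ for $1\leqslant i\leqslant n$ there is a unique diagram $d_{S,T}\in P_m$ with $\tau(d_{S,T})=S$, $\beta(d_{S,T})=T$, namely the one given by these order-matchings. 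Hence $\{x_{d_{S,T}}\ :\ S\in X_m,\ |S_i|=|T_i|\ \forall i\}$ is a basis of $W^m_T$, so $\dim W^m_T=\binom{m}{|T_0|,\ldots,|T_n|}$; and since $\{x_d\}_{d\in P_m}$ is a basis of $\mathbb{CP}_m$ with each $x_d$ lying in the single space $W^m_{\beta(d)}$, we get $\mathbb{CP}_m=\bigoplus_{T\in X_m}W^m_T$ as vector spaces. Finally $W^m_T$ is a left ideal: by the first formula of Proposition~\ref{product}, $d'\cdot x_d\in\{x_{d'd},0\}$, and stacking $d'$ on top of $d$ leaves the bottom row of $d$ untouched, so $\beta(d'd)=\beta(d)=T$ whenever $d'\cdot x_d\neq0$; thus $\mathbb{CP}_m=\bigoplus_T W^m_T$ as left $\mathbb{CP}_m$-modules.

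For irreducibility, for $S\in X_m$ let $\delta_S:=d_{S,S}$ be the ``partial identity'' diagram and consider $x_{\delta_S}$. By the third formula of Proposition~\ref{product}, $x_{\delta_S}\cdot x_{d_{S',T}}$ equals $x_{d_{S',T}}$ if $S=S'$ and $0$ otherwise, so $x_{\delta_S}$ acts on $W^m_T$ as the projection onto $\mathbb{C}\,x_{d_{S,T}}$ when $|S_i|=|T_i|$ for all $i$, and as $0$ on all of $W^m_T$ otherwise. Moreover, for two tuples $S,S'$ in the index set, the first formula of Proposition~\ref{product} applied to $d'=d_{S',S}$ gives $d'\cdot x_{d_{S,T}}=x_{d_{S',T}}$. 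Hence, given $0\neq v=\sum_S a_S x_{d_{S,T}}\in W^m_T$, pick $S^0$ with $a_{S^0}\neq0$; then $x_{\delta_{S^0}}\cdot v=a_{S^0}x_{d_{S^0,T}}$, and acting further by the diagrams $d_{S',S^0}$ yields every basis vector of $W^m_T$, so $\mathbb{CP}_m v=W^m_T$. Thus each $W^m_T$ is irreducible, and being a direct sum of irreducibles, $\mathbb{CP}_m$ is semisimple; this proves~(1).

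For~(2), assume first $|T_i|=|T'_i|$ for all $0\leqslant i\leqslant n$ (equivalently for $1\leqslant i\leqslant n$, since $|T_0|=m-\sum_{i\geqslant1}|T_i|$). Then $W^m_T$ and $W^m_{T'}$ have bases indexed by the \emph{same} set of tuples $S$, and the linear bijection $x_{d_{S,T}}\mapsto x_{d_{S,T'}}$ is a $\mathbb{CP}_m$-isomorphism: by Proposition~\ref{product} the action of any $d'\in P_m$ on $x_{d_{S,T}}$ is governed only by $d'$ and $S$ (whether $S\subseteq\beta(d')$, and the resulting new top tuple), never by the common bottom $T$ versus $T'$. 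Conversely, suppose $\phi\colon W^m_T\to W^m_{T'}$ is an isomorphism, and pick any $S\in X_m$ with $|S_i|=|T_i|$ for all $i$. Then $\dim\big(x_{\delta_S}W^m_T\big)=1$, hence $\dim\big(x_{\delta_S}W^m_{T'}\big)=\dim\big(x_{\delta_S}\phi(W^m_T)\big)=\dim\big(\phi(x_{\delta_S}W^m_T)\big)=1$, and by the description of the action of $x_{\delta_S}$ above this forces $|S_i|=|T'_i|$ for all $i$, i.e. $|T_i|=|T'_i|$ for all $i$.

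The only point demanding genuine care is the preliminary combinatorial fact: that a planar diagram is uniquely reconstructed from its top and bottom coloured-vertex data (the per-colour non-crossing matching being forced), together with the companion bookkeeping of how $\tau$ transforms under diagram multiplication. Once these are in hand, the rest of both parts is a direct transcription of the three multiplication rules in Proposition~\ref{product}.
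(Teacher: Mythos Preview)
Your proof is correct and follows exactly the route the paper indicates: the paper gives no detailed argument for this theorem, merely stating that it ``follows easily from Proposition~\ref{product}'' and citing \cite{MSS}, and your write-up is a faithful unpacking of that claim via the three multiplication rules. The one ingredient you make explicit that the paper leaves implicit is the bijection $d\leftrightarrow(\tau(d),\beta(d))$ coming from the forced order-preserving matching within each colour, which is indeed the combinatorial backbone of the argument.
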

For $d\in P_m$, we define $\omega(d)$ to be the planar diagram in $P_m$ by flipping $d$ about
 the horizonal axis, i.e. $\omega(d)$ is the transpose of $d$ when viewed as a  matrix.
 We denote by $\omega$ the linear transformation of $\mathbb CP_m$ by linearly extending the map $d\longmapsto\omega(d)$.
 One can check that $\omega$ is an anti-involution and
 \begin{equation}~\label{omega}
 \omega(d_1\otimes d_2)=\omega(d_1)\otimes\omega(d_2),\quad
 \omega(x_d)=x_{\omega(d)}.
 \end{equation}

\section{Restriction and Induction Functors}
Let $\mathscr{C}_m$ denote the category of finite dimensional $\mathbb CP_m$-modules. By the well-known Artin-Wedderburn theorem and  Theorem~\ref{semisimple}, $\mathscr{C}_m$ is completely reducible and each irreducible $\mathbb CP_m$-module is isomorphic to some $W^m_T$. For $T\in X_m$, let
\begin{equation*}
\overline{T}=(\{1,2,\cdots,m_0\},\{m_0+1,\cdots,m_0+m_1\},\cdots,\{\sum_{i=0}^{n-1}m_i,\cdots,m\})\in X_m
\end{equation*} where $m_i=|T_i|$ for $0\leqslant i\leqslant n$. Then $W^m_{\overline{T}}$, or denoted by $W^m_{m_0,\cdots,m_n}$, can be naturally chosen as a representative of the isomorphism class $[W^m_T]$.

Apart from the embedding given in \cite{MSS}, we consider, for each $1\leqslant i\leqslant n$, the embedding of algebras
\begin{equation*}
\psi_{i,m}: \mathbb CP_{m-1}\longrightarrow \mathbb CP_{m}
\end{equation*}
which takes $x\in \mathbb CP_{m-1}$ to $x\otimes (I_i-I_0)\in\mathbb CP_{m}$. Since $I_i-I_0$ is an idempotent in $\mathbb CP_1$, the
map $\psi_{i,m}$ indeed preserves the product and we note also that $\psi_{i,m}$ is not unital,
i.e. the image of the identity element $e_{m-1}$ is not $e_{m}$ but an idempotent $e_{m-1}\otimes(I_i-I_0)$ instead.

Given a $\mathbb CP_m$-module $M$ in $\mathscr{C}_m$, it is equivalent to say that there is a homomorphism of algebras
\begin{equation*}
\phi: \mathbb CP_m\longrightarrow \textrm{End}_{\mathbb C}(M).
\end{equation*}
The composition of $\phi$ and $\psi_{i,m}$ is then again a homomorphism of algebras
\begin{equation*}
\phi\comp\psi_{i,m}: \mathbb CP_{m-1}\longrightarrow \textrm{End}_{\mathbb C}(M).
\end{equation*}
Mention that we cannot regard $M$ as a $\mathbb CP_{m-1}$-module through this map, since it is not unital. To obtain a $\mathbb CP_{m-1}$-module,
it needs a little modification. We denote the idempotent $e_{m-1}\otimes(I_i-I_0)$ by $e$ for simplicity.
Then $\mathbb CP_{m-1}$ is actually embedded into $e\mathbb CP_{m}e$ by $\psi_{i,m}$, i.e.
\begin{equation*}
\overline{\psi}_{i,m}: \mathbb CP_{m-1}\longrightarrow e\mathbb CP_{m}e,\quad x\longmapsto x\otimes (I_i-I_0).
\end{equation*}
Here $e$ becomes the identity element of the subalgebra $e\mathbb CP_{m}e$ and hence $\overline{\psi}_{i,m}$ is a unital homomorphism of algebras.
See that $eM$ is an
$e\mathbb CP_{m}e$-module, i.e. there is a homomorphism of algebras
\begin{equation*}
\overline{\phi}: e\mathbb CP_me\longrightarrow \textrm{End}_{\mathbb C}(eM).
\end{equation*}
Composing $\overline{\psi}_{i,m}$ and $\overline{\phi}$, one has
\begin{equation*}
\overline{\phi}\comp\overline{\psi}_{i,m}: \mathbb CP_{m-1}\longrightarrow \textrm{End}_{\mathbb C}(eM),
\end{equation*}
through which $eM$ can be seen as a $\mathbb CP_{m-1}$-module.
The restriction functor $Res_{i,m}$ is then defined as follows,
\begin{equation*}
Res_{i,m}:\mathscr{C}_m\longrightarrow \mathscr{C}_{m-1}
\end{equation*}
which takes each $M\in ob(\mathscr{C}_m)$ to $Res_{i,m}(M)\triangleq eM=e_{m-1}\otimes(I_i-I_0)M$.
Restricting each homomorphism $f:M\longrightarrow N$ of $\mathbb CP_m$-modules to $eM$, one obtains a homomorphism of $\mathbb CP_{m-1}$-modules
\begin{equation*}
Res_{i,m}(f):eM\longrightarrow eN.
\end{equation*}
\begin{lemma}~\label{res}
For $d\in P_m$ and $1\leqslant i\leqslant n$,
\begin{align}~\label{left}
(e_{m-1}\otimes(I_i-I_0))\cdot x_d=
\begin{cases}
x_d& \text{if $m\in\tau_i(d)$},\\
0 & \text{otherwise}.
\end{cases}
\end{align}
\end{lemma}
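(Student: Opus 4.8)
The plan is to expand the idempotent $e_{m-1}\otimes(I_i-I_0)$ in the $x$-basis and then apply Proposition~\ref{product} termwise. Two preliminary observations make this painless. First, the sub-diagrams of a side-by-side diagram $d_1\otimes d_2$ are exactly the $d_1'\otimes d_2'$ with $d_j'\subseteq d_j$, and $size(d_1\otimes d_2)=size(d_1)+size(d_2)$, so a direct comparison of the defining sums gives $x_{d_1\otimes d_2}=x_{d_1}\otimes x_{d_2}$. Second, in $\mathbb CP_1$ one reads off $x_{I_0}=I_0$ and $x_{I_k}=I_k-I_0$ for $1\leqslant k\leqslant n$, hence
\begin{equation*}
e_1=\sum_{k=1}^{n}I_k-(n-1)I_0=\sum_{k=0}^{n}x_{I_k}.
\end{equation*}
Tensoring $m-1$ copies of this identity and using monoidality of $x_\bullet$,
\begin{equation*}
e_{m-1}=\sum_{d'}x_{d'},\qquad e_{m-1}\otimes(I_i-I_0)=\sum_{d'}x_{d'\otimes I_i},
\end{equation*}
where $d'$ ranges over the \emph{diagonal} diagrams of $P_{m-1}$ (those whose matrix is diagonal), and the second equality uses $x_{I_i}=I_i-I_0$.

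Next I would apply the third (equality) case of Proposition~\ref{product} to each summand: $x_{d'\otimes I_i}\cdot x_d=x_{(d'\otimes I_i)d}$ when $\beta(d'\otimes I_i)=\tau(d)$, and $0$ otherwise. Since $d'\otimes I_i$ is diagonal, $\beta(d'\otimes I_i)=\tau(d'\otimes I_i)$, and by construction its $i$-th bottom block always contains the vertex $m$. Consequently, if $m\notin\tau_i(d)$ the condition $\beta(d'\otimes I_i)=\tau(d)$ fails for every $d'$, so the whole sum is $0$, giving the second line of \eqref{left}. If $m\in\tau_i(d)$, then $\beta(d'\otimes I_i)=\tau(d)$ forces $d'$ to be the unique diagonal diagram of $P_{m-1}$ whose $j$-th strand carries the colour of the $j$-th top vertex of $d$ for $1\leqslant j\leqslant m-1$; only this single summand survives.

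It then remains to check the absorption identity $(d'\otimes I_i)\,d=d$ for that distinguished $d'$. Writing diagrams as matrices over $\mathbb Z_2^n$, the matrix $d'\otimes I_i$ is diagonal, so the $(j,l)$ entry of the product is $(d'\otimes I_i)_{jj}\cdot d_{jl}$; the choice of $d'$ means its $j$-th diagonal entry is exactly the colour of the $j$-th top vertex of $d$, so $(d'\otimes I_i)_{jj}\,d_{jl}=d_{jl}$ for all $j,l$ and the product diagram equals $d$. Hence $(e_{m-1}\otimes(I_i-I_0))\cdot x_d=x_{(d'\otimes I_i)d}=x_d$ when $m\in\tau_i(d)$, which is the first line of \eqref{left}.

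The only places requiring care are the bookkeeping in the $x$-basis expansion of $e_{m-1}$ — in particular the monoidality $x_{d_1\otimes d_2}=x_{d_1}\otimes x_{d_2}$ — and the observation that the diagonal diagrams are precisely those with $\beta=\tau$, so that their bottom colour blocks must match $\tau(d)$ \emph{on the nose}, not merely up to inclusion; this is exactly what makes the strict case of Proposition~\ref{product} apply and pin down a single surviving term. Neither point is a genuine obstacle, but the proof does hinge on using the $x$-basis (rather than the diagram basis) so that no delicate cancellations among many surviving terms have to be managed.
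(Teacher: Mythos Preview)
Your argument is correct, and it is a genuinely different route from the paper's. The paper expands $e_{m-1}\otimes(I_i-I_0)$ in the \emph{diagram} basis and applies the first (inclusion) case of Proposition~\ref{product}, then splits into three cases according to which $\tau_j(d)$ contains $m$; when $m\in\tau_i(d)$ it invokes the identity $e_m x_d=x_d$ to isolate the $I_i$ term, and when $m\in\tau_0(d)$ it needs a separate cancellation argument showing the $I_i$ and $I_0$ contributions coincide. You instead expand in the $x$-basis, using the monoidality $x_{d_1\otimes d_2}=x_{d_1}\otimes x_{d_2}$ (essentially Lemma~\ref{tensor}, which the paper only proves later in a special case) together with $e_1=\sum_k x_{I_k}$, and then apply the third (equality) case of Proposition~\ref{product}. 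The payoff is uniformity: the strict equality pins down at most one surviving summand, so no case split and no cancellation bookkeeping is needed. The trade-off is that you must establish the monoidality of $x_\bullet$ up front, whereas the paper's proof uses only the statement of Proposition~\ref{product} and the unitality $e_m x_d=x_d$. Both are short; yours is the cleaner of the two.
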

\begin{proof}
Suppose $m\in\tau_i(d)$. See that $e_{m-1}\otimes I_0$ is a linear combination of planar diagrams of the form
\begin{equation*}
I_{i_1}\otimes\cdots\otimes I_{i_{m-1}}\otimes I_0,
\end{equation*}
where $0\leqslant i_j\leqslant n$ for $1\leqslant j\leqslant m-1$. Let $d^{\prime}=I_{i_1}\otimes\cdots\otimes I_{i_{m-1}}\otimes I_0\in P_m$
Clearly $\tau(d^{\prime})=\beta(d^{\prime})$ and
$m\in\beta_0(d^{\prime})$. Hence $m\notin\beta_i(d^{\prime})\nsupseteq\tau_i(d)$, which implies by Proposition~\ref{product} that
\begin{equation*}
d^{\prime}x_d=0.
\end{equation*}
Then we have $(e_{m-1}\otimes I_0)\cdot x_d=0$. It can be similarly proved that $(e_{m-1}\otimes I_j)\cdot x_d=0$
for $1\leqslant j\leqslant n$ with $j\neq i$. Since $e_m=e_{m-1}\otimes(\sum_{k=1}^{n}I_k-(n-1)I_0)$, then we have
\begin{equation*}
e_m\cdot x_d=(e_{m-1}\otimes I_i)\cdot x_d=x_d.
\end{equation*}
Hence $(e_{m-1}\otimes(I_i-I_0))\cdot x_d=x_d$.

If $m\notin\tau_i(d)$, then $m\in\tau_j(d)$ for some $0\leqslant j\leqslant n$ with $j\neq i$. First assume $j\neq 0$.
One can prove similarly as above that
\begin{equation*}
(e_{m-1}\otimes I_i)\cdot x_d=(e_{m-1}\otimes I_0)\cdot x_d=0,
\end{equation*}
and the conclusion follows. Second, assume that $j=0$. We write $e_{m-1}\otimes(I_i-I_0)$ as a linear combination of vectors of the form
\begin{equation*}
I_{i_1}\otimes\cdots\otimes I_{i_{m-1}}\otimes(I_i-I_0).
\end{equation*}
It is easy to check that $\beta(I_{i_1}\otimes\cdots\otimes I_{i_{m-1}}\otimes I_i)\supseteq\tau(d)$ if and only if
$\beta(I_{i_1}\otimes\cdots\otimes I_{i_{m-1}}\otimes I_0)\supseteq\tau(d)$. By Proposition~\ref{product} and the fact
$(I_{i_1}\otimes\cdots\otimes I_{i_{m-1}}\otimes I_i)\cdot d=(I_{i_1}\otimes\cdots\otimes I_{i_{m-1}}\otimes I_0)\cdot d$,
one has
\begin{equation*}
(I_{i_1}\otimes\cdots\otimes I_{i_{m-1}}\otimes I_i)\cdot x_d=(I_{i_1}\otimes\cdots\otimes I_{i_{m-1}}\otimes I_0)\cdot x_d,
\end{equation*}
which completes the proof.
\end{proof}
It follows from Lemma~\ref{res} that for $T\in X_m$ and $1\leqslant i\leqslant n$,
\begin{align*}
Res_{i,m}(W^m_{T})&=\mathbb C\textendash\textrm{span}\{ex_d\ |\ d\in P_m, \beta(d)=T\}\\
&=\mathbb C\textendash\textrm{span}\{x_d\ |\ d\in P_m, \beta(d)=T, m\in\tau_i(d)\}.
\end{align*}
\begin{theorem}~\label{reduction}
For $T\in X_m$ and $1\leqslant i\leqslant n$, $Res_{i,m}(W^m_{T})=0$ if $T_i=\emptyset$,
otherwise, there is an isomorphism of $\mathbb CP_{m-1}$-modules
\begin{equation*}
Res_{i,m}(W^m_{T})\cong W^{m-1}_{m_0,m_1,\cdots,m_i-1,m_i,\cdots,m_n},
\end{equation*}
where $m_i=|T_i|$.
\end{theorem}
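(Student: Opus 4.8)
The plan is to start from the description of $Res_{i,m}(W^m_T)$ already obtained from Lemma~\ref{res}, namely that it is the span of those $x_d$ with $\beta(d)=T$ and $m\in\tau_i(d)$, and to treat the two cases separately. If $T_i=\emptyset$ the conclusion is immediate: the color-$i$ edges of any diagram $d$ set up a bijection between $\tau_i(d)$ and $\beta_i(d)$, so $\beta_i(d)=T_i=\emptyset$ forces $\tau_i(d)=\emptyset$; hence there is no $d$ with $\beta(d)=T$ and $m\in\tau_i(d)$, and $Res_{i,m}(W^m_T)=0$.

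Now assume $T_i\neq\emptyset$ and put $m_k=|T_k|$, so $m_i\geq 1$. Since $Res_{i,m}$ is a functor and, by Theorem~\ref{semisimple}$(2)$, $W^m_T\cong W^m_{\widetilde{T}}$ for any $\widetilde{T}\in X_m$ with $|\widetilde{T}_k|=m_k$ for all $k$, I may replace $T$ by such a $\widetilde{T}$ chosen so that moreover $m\in\widetilde{T}_i$ (possible because $m_i\geq 1$). The gain from this choice is a consequence of planarity: if $\beta(d)=\widetilde{T}$ and $m\in\tau_i(d)$, then since same-colored edges of $d$ cannot cross, the color-$i$ edges realize the order-preserving bijection $\tau_i(d)\to\beta_i(d)$; as $m$ is the greatest vertex it is the greatest element of both $\tau_i(d)$ and $\beta_i(d)=\widetilde{T}_i$, so the $m$-th top and $m$-th bottom vertices are joined by an edge colored $i$. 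Hence $d=d'\otimes I_i$ for a unique $d'\in P_{m-1}$, with $\beta(d')=\widetilde{T}':=(\widetilde{T}_0,\dots,\widetilde{T}_i\setminus\{m\},\dots,\widetilde{T}_n)\in X_{m-1}$; here $|\widetilde{T}'_k|=m_k$ for $k\neq i$ and $|\widetilde{T}'_i|=m_i-1$. Conversely $d'\otimes I_i$ belongs to the index set of $Res_{i,m}(W^m_{\widetilde{T}})$ for every $d'$ with $\beta(d')=\widetilde{T}'$, so $d\mapsto d'$ is a bijection between the two index sets.

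It remains to move the module structure across this bijection. A short computation with the definition of the $x$-basis gives $x_{d'\otimes I_i}=x_{d'}\otimes(I_i-I_0)=\overline{\psi}_{i,m}(x_{d'})$, so $Res_{i,m}(W^m_{\widetilde{T}})=\overline{\psi}_{i,m}\bigl(W^{m-1}_{\widetilde{T}'}\bigr)$ as a subspace of $\mathbb{C}P_m$. Since $\overline{\psi}_{i,m}$ is a homomorphism of algebras and, by construction of the functor, the $\mathbb{C}P_{m-1}$-action on $Res_{i,m}(W^m_{\widetilde{T}})=eW^m_{\widetilde{T}}$ is the one pulled back from the $\mathbb{C}P_m$-action along $\overline{\psi}_{i,m}$, the linear bijection $x_{d'}\mapsto x_{d'}\otimes(I_i-I_0)$ is a homomorphism, hence an isomorphism, of $\mathbb{C}P_{m-1}$-modules $W^{m-1}_{\widetilde{T}'}\to Res_{i,m}(W^m_{\widetilde{T}})$. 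Combined with $Res_{i,m}(W^m_T)\cong Res_{i,m}(W^m_{\widetilde{T}})$ and one further use of Theorem~\ref{semisimple}$(2)$, this identifies $Res_{i,m}(W^m_T)$ with $W^{m-1}_{m_0,\dots,m_i-1,\dots,m_n}$.

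I expect the delicate point to be the passage from diagrams to module structures rather than the diagrammatic bijection, which planarity forces. Two things must be handled with care: the choice of representative $\widetilde{T}$ so that the distinguished color-$i$ edge occupies the last column — for a general $T$ the top vertex $m$ is joined to $\max T_i$ and $d$ need not have the split form $d'\otimes I_i$ — and the identity $x_{d'\otimes I_i}=x_{d'}\otimes(I_i-I_0)$, which is precisely what converts left multiplication in $\mathbb{C}P_{m-1}$ into the $\overline{\psi}_{i,m}$-twisted $\mathbb{C}P_m$-action on $eW^m_{\widetilde{T}}$. The remaining verifications are routine bookkeeping with Proposition~\ref{product} and Theorem~\ref{semisimple}.
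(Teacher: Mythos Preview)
Your argument is correct, but the route differs from the paper's. The paper keeps the original $T$ throughout: for each $d\in P_m$ with $\beta(d)=T$ and $m\in\tau_i(d)$ it removes the rightmost color-$i$ edge together with its two endpoints to obtain $\hat d\in P_{m-1}$ with $\beta(\hat d)=\hat T$, defines $\gamma(x_d)=x_{\hat d}$, and then checks by hand, using Proposition~\ref{product}, that $\gamma(d'x_d)=d'\gamma(x_d)$ for $d'\in P_{m-1}$ (reducing to $(d'\otimes I_i)\cdot x_d$ and comparing the conditions $\beta(d'\otimes I_i)\supseteq\tau(d)$ and $\beta(d')\supseteq\tau(\hat d)$). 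Your approach instead first uses functoriality and Theorem~\ref{semisimple}(2) to move to $\widetilde T$ with $m\in\widetilde T_i$, so that planarity forces $d=d'\otimes I_i$; then the identity $x_{d'\otimes I_i}=x_{d'}\otimes(I_i-I_0)=\overline{\psi}_{i,m}(x_{d'})$ makes the $\mathbb CP_{m-1}$-equivariance automatic, because $\overline{\psi}_{i,m}$ is an algebra homomorphism. Your method buys a cleaner structural reason for the module isomorphism (no case analysis with Proposition~\ref{product}) and in fact anticipates the computation the paper only carries out later as Lemma~\ref{tensor}; the paper's method, on the other hand, gives an explicit isomorphism for \emph{every} $T$ without first passing to a preferred representative.
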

\begin{proof}
It is obvious for the case $T_i=\emptyset$. Hence we assume that $T_i\neq\emptyset$.
Note that $T$ corresponds to a row of $m$ vertices colored by the set $\{0,1,\cdots, n\}$. Let $\hat{T}\in X_{m-1}$ correspond to the row
of $m-1$ vertices obtained by deleting the rightmost vertex colored by $i$. For $d\in P_m$
with $\beta(d)=T$ and $m\in\tau_i(d)$, we denote by $\hat{d}$ the planar diagram in $P_{m-1}$ by deleting the rightmost edge colored by
$i$ and the two vertices connected. Observe that $\beta(\hat{d})=\hat{T}$ and
\begin{equation*}
\tau(\hat{d})=(\tau_0(d),\cdots, \tau_i(d)\setminus\{m\},\cdots,\tau_n(d)).
\end{equation*}
Linearly extending the bijection $\gamma: x_d\longmapsto x_{\hat{d}}$, we have an isomorphism of $\mathbb C$-vector spaces
\begin{equation*}
\gamma: Res_{i,m}(W^m_{T})\longrightarrow W^{m-1}_{\hat{T}}.
\end{equation*}
Since $W^{m-1}_{\hat{T}}\cong W^{m-1}_{m_0,m_1,\cdots,m_i-1,m_i,\cdots,m_n}$ by Theorem~\ref{semisimple},
 it remains to prove that $\gamma$ commutes with the action of $\mathbb CP_{m-1}$. For $d^{\prime}\in P_{m-1}$
and $d\in P_m$ with $\beta(d)=T$, $m\in\tau_i(d)$,
\begin{align*}
\gamma(d^{\prime}x_d)&=\gamma(\psi_{i,m}(d^{\prime})\cdot x_d)\\
                    &=\gamma((d^{\prime}\otimes(I_i-I_0))\cdot x_d)\\
                    &=\gamma((d^{\prime}\otimes I_i)\cdot x_d),
\end{align*}
where the third equality follows from the fact $(d^{\prime}\otimes I_0)\cdot x_d=0$. See that
$\beta(d^{\prime}\otimes I_i)\supseteq\tau(d)$ if and only if $\beta(d^{\prime})\supseteq\tau(\hat{d})$. Hence
if $\beta(d^{\prime}\otimes I_i)\nsupseteq\tau(d)$,
\begin{equation*}
\gamma(d^{\prime}x_d)=\gamma(0)=0=d^{\prime}\cdot x_{\hat{d}}=d^{\prime}\cdot \gamma(x_d).
\end{equation*}
If $\beta(d^{\prime}\otimes I_i)\supseteq\tau(d)$, one has
\begin{equation*}
\gamma(d^{\prime}x_d)=\gamma(x_{(d^{\prime}\otimes I_i)\cdot d})=x_{d^{\prime}\cdot\hat{d}}=d^{\prime}x_{\hat{d}}=d^{\prime}\gamma(x_d).
\end{equation*}
\end{proof}
We remark here that a similar result was shown by Mousley, Schley and Shoemaker in \cite[Theorem 4.1]{MSS}
while there is a little difference which results from the different
definitions of embeddings $\mathbb CP_{m-1}\longrightarrow\mathbb CP_m$.

Next, for any embedding $\psi_{i,m}: \mathbb CP_{m-1}\longrightarrow\mathbb CP_m$, we define an induction functor
\begin{equation*}
Ind_{i,m}: \mathscr{C}_{m-1}\longrightarrow\mathscr{C}_{m}
\end{equation*}
by $Ind_{i,m}(M)=\mathbb CP_{m}\otimes_{\mathbb CP_{m-1}}M$ for $M\in ob(\mathscr{C}_{m-1})$. Here
\begin{align*}
Ind_{i,m}(M)&=\mathbb CP_{m}(e+(e_m-e))\otimes_{\mathbb CP_{m-1}}M\\
                     &=\mathbb CP_{m}e\otimes_{\mathbb CP_{m-1}}M+\mathbb CP_{m}(e-e_m)\otimes_{\mathbb CP_{m-1}}M\\
                     &=\mathbb CP_{m}e\otimes_{\mathbb CP_{m-1}}M+\mathbb CP_{m}(e-e_m)\otimes_{\mathbb CP_{m-1}}e_{m-1}M\\
                     &=\mathbb CP_{m}e\otimes_{\mathbb CP_{m-1}}M+\mathbb CP_{m}(e-e_m)\psi_{i,m}(e_{m-1})\otimes_{\mathbb CP_{m-1}}M\\
                     &=\mathbb CP_{m}e\otimes_{\mathbb CP_{m-1}}M+\mathbb CP_{m}(e-e_m)e\otimes_{\mathbb CP_{m-1}}M\\
                     &=\mathbb CP_{m}e\otimes_{\mathbb CP_{m-1}}M,
\end{align*}
where $e=e_{m-1}\otimes(I_i-I_0)$.
\begin{lemma}~\label{induce}
For $d\in P_m$ and $1\leqslant i\leqslant n$,
\begin{align}
x_d\cdot(e_{m-1}\otimes(I_i-I_0))=
\begin{cases}
x_d& \text{if $m\in\beta_i(d)$},\\
0 & \text{otherwise}.
\end{cases}
\end{align}~\label{right}
\end{lemma}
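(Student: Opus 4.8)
The plan is to mirror the proof of Lemma~\ref{res} (i.e.\ Lemma~\ref{left}), using the anti-involution $\omega$ to transfer the left-multiplication statement to a right-multiplication statement. Specifically, I would first observe that $\omega$ fixes the idempotent $e_{m-1}\otimes(I_i-I_0)$, since by \eqref{omega} we have $\omega(e_{m-1}\otimes(I_i-I_0))=\omega(e_{m-1})\otimes\omega(I_i-I_0)=e_{m-1}\otimes(I_i-I_0)$, using that $e_{m-1}$, $I_i$ and $I_0$ are each symmetric as matrices (equivalently, fixed by flipping the diagram). Next, again by \eqref{omega}, $\omega(x_d)=x_{\omega(d)}$ for every $d\in P_m$.

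Applying $\omega$ to the identity in Lemma~\ref{left} with $d$ replaced by $\omega(d)$, and using that $\omega$ is an anti-homomorphism, gives
\begin{equation*}
\omega\bigl((e_{m-1}\otimes(I_i-I_0))\cdot x_{\omega(d)}\bigr)=\omega(x_{\omega(d)})\cdot\omega(e_{m-1}\otimes(I_i-I_0))=x_d\cdot(e_{m-1}\otimes(I_i-I_0)).
\end{equation*}
By Lemma~\ref{left}, the left-hand side equals $\omega(x_{\omega(d)})=x_d$ when $m\in\tau_i(\omega(d))$, and equals $0$ otherwise. It only remains to note that $\tau_i(\omega(d))=\beta_i(d)$: flipping $d$ about the horizontal axis interchanges the top and bottom rows, so a top vertex of $\omega(d)$ lying on an $i$-colored edge corresponds exactly to a bottom vertex of $d$ lying on an $i$-colored edge. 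Hence $m\in\tau_i(\omega(d))\iff m\in\beta_i(d)$, and substituting this equivalence into the case distinction yields precisely the claimed formula.

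Alternatively, if one prefers a direct argument, it would run exactly parallel to the proof of Lemma~\ref{res}: expand $e_{m-1}\otimes I_0$ and $e_{m-1}\otimes I_j$ ($j\neq i$) as linear combinations of diagrams $I_{i_1}\otimes\cdots\otimes I_{i_{m-1}}\otimes I_0$ and $I_{i_1}\otimes\cdots\otimes I_{i_{m-1}}\otimes I_j$, and use the second case of Proposition~\ref{product} (right multiplication $x_{d'}\cdot d$, governed by the condition $\beta(d')\subseteq\tau(d)$) in place of the first. The bookkeeping about which vertex of the tensor factor carries color $0$ versus color $i$ is the same, with the roles of $\tau$ and $\beta$ swapped throughout. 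I expect no real obstacle here: the only point requiring a moment's care is confirming that $\omega$ genuinely fixes $e_{m-1}\otimes(I_i-I_0)$ and that it translates the top-row condition $m\in\tau_i$ into the bottom-row condition $m\in\beta_i$; everything else is a formal consequence of Lemma~\ref{res}, \eqref{omega}, and the anti-involution property of $\omega$.
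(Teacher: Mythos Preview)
Your proof is correct and takes essentially the same approach as the paper, which simply states that the result follows by applying $\omega$ to \eqref{left}. You have merely filled in the details that the paper leaves implicit (that $\omega$ fixes $e_{m-1}\otimes(I_i-I_0)$ and interchanges $\tau_i$ with $\beta_i$).
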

\begin{proof}
The result is straightforward by applying $\omega$ to (\ref{left}).
\end{proof}
Suppose $M=W^{m-1}_T$ for some $T\in X_{m-1}$. Let $d_T$ be the unique planar diagram with $\tau(d_T)=\beta(d_T)=T$.
We have $x_{d_T}\in W^{m-1}_T$ and
\begin{equation*}W^{m-1}_T=\mathbb CP_{m-1}x_{d_T}
\end{equation*} since $W^{m-1}_T$ is irreducible. Hence
\begin{align*}
Ind_{i,m}(W^{m-1}_T)&=\mathbb CP_{m}e\otimes_{\mathbb CP_{m-1}}W^{m-1}_T\\
                             &=\mathbb CP_{m}e\otimes_{\mathbb CP_{m-1}}\mathbb CP_{m-1}x_{d_T}\\
                             &=\mathbb CP_{m}e\otimes_{\mathbb CP_{m-1}}x_{d_T}\\
                             &=\mathbb C\textendash\textrm{span}\{x_d\otimes_{\mathbb CP_{m-1}}x_{d_T}\ |\ d\in P_m,\ m\in\beta_i(d)\},
\end{align*}
where the fourth equality is implied by Lemma~\ref{induce}.
\begin{lemma}~\label{tensor}
For $T\in P_m$ and $1\leqslant i\leqslant n$,
\begin{equation*}
x_{d_T}\otimes(I_i-I_0)=x_{d_T\otimes I_i}.
\end{equation*}
\end{lemma}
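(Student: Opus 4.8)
The plan is to expand both sides using the defining formula $x_d=\sum_{d'\subseteq d}(-1)^{size(d)-size(d')}d'$ and match terms, exploiting the fact that the second tensor factor on the left, namely $I_i-I_0$, is (up to the obvious relabelling) nothing but $x_{I_i}$. First I would describe the sub-diagrams of $d_T\otimes I_i$: since the edges of $d_T\otimes I_i$ are exactly the edges of $d_T$ (on the first $m-1$ pairs of vertices) together with the single edge of $I_i$ joining the two rightmost vertices, deleting a subset of edges yields a diagram of the shape $d'\otimes I_i$ (rightmost edge kept) or $d'\otimes I_0$ (rightmost edge deleted), for a unique sub-diagram $d'\subseteq d_T$. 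Thus sub-diagrams of $d_T\otimes I_i$ are in bijection with pairs $(d',\,\{I_0,I_i\})$ where $d'$ ranges over sub-diagrams of $d_T$. Here I should note in passing that $d'\otimes I_0$ and $d'\otimes I_i$ are indeed elements of $P_m$: adjoining a single column and row that carry at most the corner edge $I_i$ cannot create a forbidden $2\times2$ block of type (b), so planarity is preserved.

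Next I would carry out the size bookkeeping: $size(d_T\otimes I_i)=size(d_T)+1$, $size(d'\otimes I_i)=size(d')+1$, and $size(d'\otimes I_0)=size(d')$. Substituting into $x_{d_T\otimes I_i}=\sum_{d''\subseteq d_T\otimes I_i}(-1)^{size(d_T\otimes I_i)-size(d'')}d''$ and grouping the two terms coming from each $d'\subseteq d_T$, the contribution of the pair is $(-1)^{size(d_T)-size(d')}(d'\otimes I_i)+(-1)^{size(d_T)-size(d')+1}(d'\otimes I_0)=(-1)^{size(d_T)-size(d')}\,d'\otimes(I_i-I_0)$, using bilinearity of the tensor embedding $\mathbb CP_{m-1}\otimes\mathbb CP_1\hookrightarrow\mathbb CP_m$. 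Pulling the fixed factor $I_i-I_0$ out of the sum over $d'$ then gives $\left(\sum_{d'\subseteq d_T}(-1)^{size(d_T)-size(d')}d'\right)\otimes(I_i-I_0)=x_{d_T}\otimes(I_i-I_0)$, which is exactly the assertion.

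I do not anticipate a real obstacle: the statement is just the multiplicativity of $d\mapsto x_d$ along the tensor embedding, specialised to the case where the appended factor is the idempotent $I_i-I_0$, and the argument above works verbatim with $d_T$ replaced by an arbitrary $d\in P_m$. The only point that deserves to be written out with care is the bijection between sub-diagrams of $d_T\otimes I_i$ and pairs $(d',\{I_0,I_i\})$ together with the three size identities, since it is precisely the mismatch $size(d'\otimes I_i)-size(d'\otimes I_0)=1$ that produces the sign making $I_i-I_0$ appear; I would display this cancellation explicitly so the reader sees where the difference $I_i-I_0$ comes from.
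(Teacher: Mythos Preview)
Your proposal is correct and follows essentially the same approach as the paper: both expand $x_{d_T\otimes I_i}$ by observing that every sub-diagram of $d_T\otimes I_i$ has the form $d'\otimes I_i$ or $d'\otimes I_0$ for some $d'\subseteq d_T$, track the sizes to extract the common sign $(-1)^{size(d_T)-size(d')}$, and factor out $I_i-I_0$ to recover $x_{d_T}\otimes(I_i-I_0)$. Your version is slightly more explicit about the size bookkeeping and the planarity check, and your remark that the argument works for any $d$ in place of $d_T$ is a valid (if unneeded) observation.
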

\begin{proof}
For $d^{\prime}\subseteq d_T\otimes I_i$, $d^{\prime}=d_1\otimes I_i$ or $d^{\prime}=d_2\otimes I_0$ where $d_1, d_2\subseteq d_T$. Thus
\begin{align*}
x_{d_T\otimes I_i}&=\sum_{d^{\prime}\subseteq d_T\otimes I_i}(-1)^{size(d_T\otimes I_i,d^{\prime})}d^{\prime}\\
                  &=\sum_{d_1\subseteq d_T}(-1)^{size(d_T,d_1)}d_1\otimes I_i+\sum_{d_2\subseteq d_T}(-1)^{size(d_T,d_2)+1}d_2\otimes I_0\\
                  &=\sum_{d_1\subseteq d_T}(-1)^{size(d_T,d_1)}d_1\otimes I_i-\sum_{d_1\subseteq d_T}(-1)^{size(d_T,d_1)}d_1\otimes I_0\\
                  &=\sum_{d_1\subseteq d_T}(-1)^{size(d_T,d_1)}d_1\otimes(I_i-I_0)\\
                  &=x_{d_T}\otimes(I_i-I_0),
\end{align*}
where we write $size(d_1,d_2)$ for $size(d_1)-size(d_2)$.
\end{proof}
By Proposition~\ref{product}, $x_{d_T}\cdot x_{d_T}=x_{d_T}$ for $T\in X_{m-1}$. Then we have, for $d\in P_m$ with $m\in\beta_i(d)$,
\begin{align*}
x_d\otimes_{\mathbb CP_{m-1}}x_{d_T}&=x_d\otimes_{\mathbb CP_{m-1}}x_{d_T}x_{d_T}\\
                                    &=x_d\psi_{i,m}(x_{d_T})\otimes_{\mathbb CP_{m-1}}x_{d_T}\\
                                    &=x_dx_{d_T\otimes I_i}\otimes_{\mathbb CP_{m-1}}x_{d_T}\\
                                    &=\delta_{\beta(d),\tau(d_T\otimes I_i)}x_{d\cdot(d_T\otimes I_i)}\otimes_{\mathbb CP_{m-1}}x_{d_T}\\
                                    &=\delta_{\beta(d),\tau(d_T\otimes I_i)}x_d\otimes_{\mathbb CP_{m-1}}x_{d_T},
\end{align*}
which implies that
\begin{equation*}
Ind_{i,m}(W^{m-1}_T)=\mathbb C\textendash\textrm{span}\{
x_d\otimes_{\mathbb CP_{m-1}}x_{d_T}\ |\ d\in P_m,\ \beta(d)=\widetilde{T}^i\},
\end{equation*}
where $\widetilde{T}^i=(T_0,\cdots,T_i\cup\{m\},\cdots,T_n)\in X_m$. The following theorem is then an immediate consequence.
\begin{theorem}~\label{induction}
For $T\in X_{m-1}$ and $1\leqslant i\leqslant n$, $Ind_{i,m}(W^{m-1}_T)\cong W^{m}_{\widetilde{T}^i}$ as $\mathbb CP_m$-modules.
\end{theorem}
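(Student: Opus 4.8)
The plan is to write down the evident comparison map and then read off bijectivity from the spanning set for $Ind_{i,m}(W^{m-1}_T)$ already computed above. First I would record, via Lemma~\ref{tensor}, that $\psi_{i,m}(x_{d_T})=x_{d_T}\otimes(I_i-I_0)=x_{d_T\otimes I_i}$, that $d_T\otimes I_i$ is the (unique) partial-identity diagram with $\tau(d_T\otimes I_i)=\beta(d_T\otimes I_i)=\widetilde{T}^i$, and hence $x_{d_T\otimes I_i}\in W^m_{\widetilde{T}^i}$. Writing a typical element of $W^{m-1}_T=\mathbb CP_{m-1}x_{d_T}$ as $b\,x_{d_T}$ with $b\in\mathbb CP_{m-1}$, I would define
\begin{equation*}
\theta\colon Ind_{i,m}(W^{m-1}_T)=\mathbb CP_m\otimes_{\mathbb CP_{m-1}}W^{m-1}_T\longrightarrow W^m_{\widetilde{T}^i},\qquad a\otimes_{\mathbb CP_{m-1}}(b\,x_{d_T})\longmapsto a\,\psi_{i,m}(b)\,x_{d_T\otimes I_i}.
\end{equation*}

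Next I would verify that $\theta$ is well defined and a $\mathbb CP_m$-homomorphism. The right-hand side equals $a\,\psi_{i,m}(b\,x_{d_T})$, which depends only on the vector $b\,x_{d_T}$ and not on the chosen $b$, because $\psi_{i,m}$ is an algebra homomorphism and $x_{d_T\otimes I_i}=\psi_{i,m}(x_{d_T})$; and the underlying bilinear map $\mathbb CP_m\times W^{m-1}_T\to W^m_{\widetilde{T}^i}$ is $\mathbb CP_{m-1}$-balanced for the same reason, so it factors through $\otimes_{\mathbb CP_{m-1}}$, and it is a left $\mathbb CP_m$-map by construction. Using Proposition~\ref{product} together with the fact that $d\cdot(d_T\otimes I_i)=d$ whenever $\beta(d)=\widetilde{T}^i$ (since $d_T\otimes I_i$ is a partial identity on $\widetilde{T}^i$), I would compute, on the spanning vectors exhibited before the theorem, $\theta(x_d\otimes_{\mathbb CP_{m-1}}x_{d_T})=x_d\,x_{d_T\otimes I_i}=x_d$ for every $d\in P_m$ with $\beta(d)=\widetilde{T}^i$.

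Finally I would conclude as follows: the computation preceding the theorem shows that $\{x_d\otimes_{\mathbb CP_{m-1}}x_{d_T}:d\in P_m,\ \beta(d)=\widetilde{T}^i\}$ spans $Ind_{i,m}(W^{m-1}_T)$, while $\{x_d:d\in P_m,\ \beta(d)=\widetilde{T}^i\}$ is part of the basis $\{x_d\}$ of $\mathbb CP_m$ and hence a basis of $W^m_{\widetilde{T}^i}$ (Theorem~\ref{semisimple}); since $\theta$ carries the first set onto the second along the bijection $x_d\otimes x_{d_T}\mapsto x_d$, it is surjective, the first set is in fact a basis, and $\theta$ is an isomorphism. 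A more abstract route is to use that $Ind_{i,m}$ is left adjoint to $Res_{i,m}$, giving $\textrm{Hom}_{\mathbb CP_m}(Ind_{i,m}(W^{m-1}_T),W^m_{\widetilde{T}^i})\cong\textrm{Hom}_{\mathbb CP_{m-1}}(W^{m-1}_T,Res_{i,m}(W^m_{\widetilde{T}^i}))$, with $Res_{i,m}(W^m_{\widetilde{T}^i})\cong W^{m-1}_T$ by Theorem~\ref{reduction}; Schur's lemma then produces a nonzero map, which is onto because $W^m_{\widetilde{T}^i}$ is irreducible and then injective because $\dim Ind_{i,m}(W^{m-1}_T)\le\#\{d\in P_m:\beta(d)=\widetilde{T}^i\}=\dim W^m_{\widetilde{T}^i}$. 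The only step needing real care is the $\mathbb CP_{m-1}$-balancedness/well-definedness of $\theta$; once Lemma~\ref{tensor} identifies $\psi_{i,m}(x_{d_T})$ with $x_{d_T\otimes I_i}$ this reduces to the multiplicativity of $\psi_{i,m}$, and everything else is bookkeeping with Proposition~\ref{product}.
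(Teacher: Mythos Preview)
Your proof is correct and follows the same approach as the paper: the paper declares the theorem ``an immediate consequence'' of the spanning-set computation preceding it, and your explicit comparison map $\theta$ (carrying $x_d\otimes_{\mathbb CP_{m-1}}x_{d_T}\mapsto x_d$ for $\beta(d)=\widetilde{T}^i$) is precisely the isomorphism the paper leaves implicit. Your adjunction-based alternative is a pleasant extra not in the paper, but the primary argument is the same.
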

For $M\in ob(\mathscr{C}_m)$, we define $Res_{0,m}(M)=(e_{m-1}\otimes I_0)M$ and restrict the action of $\mathscr C_m$ to $\mathscr C_{m-1}$
on $Res_{0,m}(M)$, through the embedding of algebras
\begin{equation*}\psi_{0,m}:\ \mathbb CP_{m-1}\longrightarrow\mathbb CP_m
\end{equation*}
which takes $x$ to $x\otimes I_0$. This gives a restriction functor
\begin{equation*}
Res_{0,m}:\ \mathscr C_{m}\longrightarrow\mathscr C_{m-1}.
\end{equation*}
Via the embedding $\psi_{0,m}$, a induction functor $Ind_{0,m}:\ \mathscr C_{m-1}\longrightarrow\mathscr C_{m}$ is also given by
$Ind_{0,m}(N)=\mathbb CP_m\otimes_{\mathbb CP_{m-1}}N$. Similarly one can show the following theorem.
\begin{theorem}~\label{resind0}
\begin{itemize}
\item[(1)]For $T\in X_m$, $Res_{0,m}(W^m_T)=0$ if $T_0=\emptyset$; otherwise
$Res_{0,m}(W^m_T)\cong W^m_{m_0-1,m_1,\cdots,m_n}$ as $\mathbb CP_{m-1}$-modules.
\item[(2)] For $S\in X_{m-1}$, $Ind_{0,m}(W^{m-1}_S)\cong W^m_{m_0+1,m_1,\cdots,m_n}$ as $\mathbb CP_{m}$-modules, where $m_i=|S_i|$.
\end{itemize}
\end{theorem}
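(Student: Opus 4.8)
\emph{Proof plan.} The plan is to imitate the proofs of Theorems~\ref{reduction} and~\ref{induction} essentially verbatim, with the idempotent $I_i-I_0\in\mathbb CP_1$ replaced by the idempotent $I_0$ and $\psi_{i,m}$ replaced by $\psi_{0,m}$. The first step is to record the $0$-colored versions of Lemmas~\ref{res}, \ref{induce} and~\ref{tensor}: for $d\in P_m$ one has $(e_{m-1}\otimes I_0)\cdot x_d=x_d$ if $m\in\tau_0(d)$ and $0$ otherwise; dually $x_d\cdot(e_{m-1}\otimes I_0)=x_d$ if $m\in\beta_0(d)$ and $0$ otherwise; and $x_{d_S}\otimes I_0=x_{d_S\otimes I_0}$ for $S\in X_{m-1}$. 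The proof of the first identity copies that of Lemma~\ref{res}: expand $e_{m-1}\otimes I_0$ over diagrams $I_{i_1}\otimes\cdots\otimes I_{i_{m-1}}\otimes I_0$; if $m\in\tau_j(d)$ with $j\geqslant 1$ each such diagram annihilates $x_d$ by Proposition~\ref{product}; if $m\in\tau_0(d)$ one shows, using that the order $\subseteq$ on $X_m$ only constrains the components $1,\dots,n$ and that $(I_{i_1}\otimes\cdots\otimes I_{i_{m-1}}\otimes I_k)\cdot d=(I_{i_1}\otimes\cdots\otimes I_{i_{m-1}}\otimes I_0)\cdot d$, that $(e_{m-1}\otimes I_k)\cdot x_d=(e_{m-1}\otimes I_0)\cdot x_d$ for $1\leqslant k\leqslant n$, whence $x_d=e_m\cdot x_d=(e_{m-1}\otimes I_0)\cdot x_d$. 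The dual identity follows by applying the anti-involution $\omega$, which fixes $e_{m-1}\otimes I_0$; and $x_{d_S}\otimes I_0=x_{d_S\otimes I_0}$ holds because the subdiagrams of $d_S\otimes I_0$ are exactly the $d_1\otimes I_0$ with $d_1\subseteq d_S$, each of the same size as $d_1$.

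For part (1), the first identity yields $Res_{0,m}(W^m_T)=\mathbb C\textrm{-span}\{x_d\mid d\in P_m,\ \beta(d)=T,\ m\in\tau_0(d)\}$. Since every $d\in P_m$ has the same number of edges counted from the top or from the bottom, $|\tau_0(d)|=|\beta_0(d)|=|T_0|$, so $T_0=\emptyset$ forces $\tau_0(d)=\emptyset$ for all such $d$ and $Res_{0,m}(W^m_T)=0$. Assume $T_0\neq\emptyset$ and put $p=\max T_0$. Let $\widehat T\in X_{m-1}$ be obtained from $T$ by removing the color-$0$ entry $p$ and relabelling, and for $d\in P_m$ with $\beta(d)=T$ and $m\in\tau_0(d)$ let $\widehat d\in P_{m-1}$ be obtained by deleting the isolated top vertex $m$ and the isolated bottom vertex $p$; then $\beta(\widehat d)=\widehat T$ and $\tau(\widehat d)=(\tau_0(d)\setminus\{m\},\tau_1(d),\dots,\tau_n(d))$. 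As in Theorem~\ref{reduction}, $x_d\mapsto x_{\widehat d}$ extends to a linear isomorphism $\gamma\colon Res_{0,m}(W^m_T)\to W^{m-1}_{\widehat T}$, and it is $\mathbb CP_{m-1}$-linear: for $d'\in P_{m-1}$ one has $d'\cdot x_d=(d'\otimes I_0)\cdot x_d$, the equivalence $\tau(d)\subseteq\beta(d'\otimes I_0)\Longleftrightarrow\tau(\widehat d)\subseteq\beta(d')$ holds because $\subseteq$ ignores the $0$-component, and $\widehat{(d'\otimes I_0)d}=d'\cdot\widehat d$ since top vertex $m$ and bottom vertex $p$ of $d$ remain isolated after composing with $d'\otimes I_0$; applying Proposition~\ref{product} on both sides gives $\gamma(d'x_d)=d'\gamma(x_d)$. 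Finally $W^{m-1}_{\widehat T}\cong W^{m-1}_{m_0-1,m_1,\dots,m_n}$ by Theorem~\ref{semisimple}.

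For part (2), the same reduction that precedes Theorem~\ref{induction} gives $Ind_{0,m}(W^{m-1}_S)=\mathbb CP_m e\otimes_{\mathbb CP_{m-1}}W^{m-1}_S=\mathbb CP_m e\otimes_{\mathbb CP_{m-1}}x_{d_S}$, with $e=e_{m-1}\otimes I_0$, the last equality because $W^{m-1}_S=\mathbb CP_{m-1}x_{d_S}$, and the reduction using $\psi_{0,m}(e_{m-1})=e$ together with $e_me=e$. Using $x_{d_S}x_{d_S}=x_{d_S}$, the identity $\psi_{0,m}(x_{d_S})=x_{d_S}\otimes I_0=x_{d_S\otimes I_0}$, and $\tau(d_S\otimes I_0)=\widetilde S^{0}$ where $\widetilde S^{0}=(S_0\cup\{m\},S_1,\dots,S_n)\in X_m$, the manipulation preceding Theorem~\ref{induction} shows that, for $d\in P_m$ with $m\in\beta_0(d)$, $x_d\otimes_{\mathbb CP_{m-1}}x_{d_S}=\delta_{\beta(d),\widetilde S^{0}}\,x_d\otimes_{\mathbb CP_{m-1}}x_{d_S}$; combined with the dual identity of the first paragraph this gives $Ind_{0,m}(W^{m-1}_S)=\mathbb C\textrm{-span}\{x_d\otimes_{\mathbb CP_{m-1}}x_{d_S}\mid \beta(d)=\widetilde S^{0}\}$. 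The $\mathbb CP_m$-linear surjection $W^m_{\widetilde S^{0}}\to Ind_{0,m}(W^{m-1}_S)$, $x_d\mapsto x_d\otimes_{\mathbb CP_{m-1}}x_{d_S}$, is nonzero (as $\mathbb CP_{m-1}$ is semisimple, $Ind_{0,m}$ does not annihilate the nonzero module $W^{m-1}_S$), hence an isomorphism because $W^m_{\widetilde S^{0}}$ is irreducible; and $W^m_{\widetilde S^{0}}\cong W^m_{m_0+1,m_1,\dots,m_n}$ by Theorem~\ref{semisimple}.

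I expect the only genuinely delicate point, and the sole place where color $0$ behaves differently from colors $1,\dots,n$, to be the construction of $\widehat d$ in part (1): an isolated top vertex of $d$ need not sit above an isolated bottom vertex, so the deleted top vertex (necessarily $m$) and the deleted bottom vertex ($\max T_0$) generally occupy different positions, and one must check both that the relabelling is internally consistent and that $\gamma$ really intertwines the $\mathbb CP_{m-1}$-actions, i.e.\ that $\widehat{(d'\otimes I_0)d}=d'\cdot\widehat d$. Everything else is the diagram bookkeeping of Theorems~\ref{reduction} and~\ref{induction}, together with the observation that the $\subseteq$-order on $X_m$ disregarding the $0$-component is exactly what forces the $0$-colored analogue of Lemma~\ref{res} to come out as stated.
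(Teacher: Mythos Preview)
Your proposal is correct and follows exactly the approach the paper intends: the paper's own ``proof'' of Theorem~\ref{resind0} is the single sentence ``Similarly one can show the following theorem,'' referring back to Theorems~\ref{reduction} and~\ref{induction}, and you have carried out precisely that transfer, correctly identifying the one genuinely new point (that for $i=0$ the deleted top vertex $m$ and bottom vertex $\max T_0$ are both isolated rather than joined by an edge, so one must verify $\widehat{(d'\otimes I_0)d}=d'\widehat d$ directly). Your $0$-colored analogues of Lemmas~\ref{res}--\ref{tensor} are right, and the key observation that the partial order $\subseteq$ on $X_m$ ignores the $0$-component is exactly what makes the case split in Lemma~\ref{res} collapse here; your justification that $Ind_{0,m}(W^{m-1}_S)\neq 0$ could be sharpened (e.g.\ by noting that $\psi_{0,m}$ is an isomorphism onto $e\mathbb CP_m e$, so $Ind_{0,m}$ is the standard idempotent induction for a semisimple algebra), but this is no less rigorous than the paper's ``immediate consequence'' preceding Theorem~\ref{induction}.
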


It is easy to show that for $0\leqslant i\leqslant n$, $Ind_{i,m}$ is left
adjoint to $Res_{i,m}$, i.e.
\begin{equation*}
\textrm{Hom}_{\mathbb CP_{m}}(Ind_{i,m}(M),N)\cong \textrm{Hom}_{\mathbb CP_{m-1}}(M,Res_{i,m}(N)),
\end{equation*}
which is natural in $M\in ob(\mathscr{C}_{m-1})$ and $N\in ob(\mathscr{C}_{m})$.

\section{Crystal Structure}

\subsection{Crystal Bases for $U_q(gl_n)$-modules}

Fix an indeterminant $q$. The quantized enveloping algebra associated to $gl_{n+1}(\mathbb C)$, denoted by $U_q(gl_{n+1})$, is a $\mathbb C(q)$-algebra generated by $E_i$, $F_i$ and $q^h$ for $1\leqslant i\leqslant n$
 and $h\in\mathfrak h$. We omit the generating relations of $U_q(gl_{n+1})$ and refer the readers to \cite{HK}
for more details.
 
The representation theory
of $U_q(gl_{n+1})$ is paralleled to that of $gl_{n+1}(\mathbb C)$. It is known that finite dimensional irreducible polynomial representation
of $U_q(gl_{n+1})$, or $U_q(gl_{n+1})$-modules, are indexed by the set of partitions of the form
\begin{equation*}
\lambda=(\lambda_1, \lambda_2, \cdots, \lambda_{n+1}),
\end{equation*}
where $\lambda_i\in\mathbb N$ and $\lambda_1\geqslant\lambda_2\geqslant\cdots\geqslant\lambda_{n+1}\geqslant 0$.
Indeed, every finite dimensional irreducible $U_q(gl_{n+1})$-module $V$ has a decomposition of weight spaces
\begin{equation*}
V=\oplus_{\mu\in\mathfrak h^{\ast}}V_{\mu},
\end{equation*}
where $V_{\mu}=\{v\in V\ |\ q^hv=\lambda(h)v\}$. Furthermore, there is a highest weight $\lambda$ among all $\mu$'s such that $V_{\mu}\neq 0$ in
the following sense,
\begin{equation*}
\lambda-\mu\in\sum_{i=1}^{n}\mathbb N(\epsilon_i-\epsilon_{i+1}).
\end{equation*}
The polynomial representations we considered are those
whose weights can be written as a combination of $\epsilon_i$ with non-negative integer coefficients. The highest weight $\lambda$ of an irreducible polynomial
representation is known to be dominant, i.e.
\begin{equation*}
\lambda=\lambda_1\epsilon_1+\cdots+\lambda_{n+1}\epsilon_{n+1}\in\mathfrak h^{\ast},
\end{equation*}
where $\lambda_i\in\mathbb N$ and $\lambda_1\geqslant\lambda_2\geqslant\cdots\geqslant\lambda_{n+1}\geqslant 0$. Hence each dominant
weight corresponds to a partition, or, a Young diagram and we do not distinguish them.  We use $V(\lambda)$ to denote the
finite dimensional irreducible polynomial $U_q(gl_{n+1})$-module of highest weight weight $\lambda$. It is well known that the basis vectors of
$V(\lambda)$ can be parameterized by semistandard Young tableaux of shape $\lambda$\cite{Fu, HK}.
The crystal basis theory developed by Kashiwara 
not only reinterprets this classical result, but also give a combinatorial explanation to the Littlewood-Richardson rule. Indeed, one can realize the
crystal graph of $B(\lambda)$ with semistandard Young tableaux of shape $\lambda$. For example, when $\lambda=\epsilon_1$, $V(\epsilon_1)$ is the natural
$U_q(gl_{n+1})$-module of dimension $n+1$ with the following crystal graphs
\begin{equation*}
\Yvcentermath1\young(0)\xrightarrow{\ \ \tilde{f}_1\ \ }\young(1) \xrightarrow{\ \ \tilde{f}_2\ \ }\young(2) \xrightarrow{\ \
 \tilde{f}_3\ \ }
\cdots\xrightarrow{\ \tilde{f}_{n}\ }\young(n),
\end{equation*}
where $wt(\Yvcentermath1\young(j))=\epsilon_{j+1}$, $\varepsilon_i(\Yvcentermath1\young(j) )=\delta_{i,j}$ and $\varphi_i(\Yvcentermath1\young(j) )=\delta_{i,j+1}$.
 Mention that for convenience, the Young tableaux in this paper are
Young diagrams filled with numbers in $\{0, 1, \cdots, n\}$ instead of those in $\{1,\cdots, n+1\}$ though the latter is more often used. 
Since every irreducible polynomial $U_q(gl_{n+1})$-module $V(\lambda)$ is a
direct summand of $V(\epsilon_1)^{\otimes |\lambda|}$, there is an embedding of crystals from $B(\lambda)$ to $B(\epsilon_1)^{\otimes|\lambda|}$. Note
that usually the ways of embedding are not unique, among which we choose the following one, called the Middle-Eastern reading,
\begin{equation*}
\Psi: B(\lambda)\longrightarrow B(\epsilon_1)^{\otimes|\lambda|},
\end{equation*}
which maps a semistandard Young tableau $T\in B(\lambda)$ to the tensor product of boxes in $T$ from right to left in each row and from top to bottom.
For instance,
\begin{equation*}
\Psi(\Yvcentermath1\young(0113,23,3))=\Yvcentermath1\young(3)\otimes\Yvcentermath1\young(1)\otimes\Yvcentermath1\young(1)\otimes\Yvcentermath1\young(0)
\otimes\Yvcentermath1\young(3)\otimes\Yvcentermath1\young(2)\otimes\Yvcentermath1\young(3).
\end{equation*}
Lifting the actions of Kashiwara operators $\tilde{e_i}$ and $\tilde{f}_i$ on $B(\epsilon_1)^{|\lambda|}$ to those on $B(\lambda)$, one obtains a crystal stucture
on $B(\lambda)$.

\subsection{$B(m\epsilon_1)$}

In this subsection, we categorify the crystal basis $B(\lambda)$ for $\lambda=m\epsilon_1$ via representation theory
of colored planar rook algebras. In this case, $\lambda$ can be viewed as a Young diagram which has only one row of $m$ boxes. As discussed in
last subsection, we can realize $B(m\epsilon_1)$ with semistandard Young tableaux of shape $(m,0,\cdots,0)$.

We remind the readers that in Section 3, for $0\leqslant i\leqslant n$, two functors
\begin{align*}
Res_{i,m}:\ &\mathscr{C}_m\longrightarrow \mathscr{C}_{m-1},\\
Ind_{i,m}:\ &\mathscr{C}_{m-1}\longrightarrow \mathscr{C}_{m},
\end{align*}
are introduced.
We denote by $[\mathscr{C}_m]$ the set of all isomorphism classes of irreducible objects in $\mathscr{C}_m$,
i.e.
\begin{equation*}
[\mathscr{C}_m]=\{[W^m_{m_0,m_1,\cdots,m_n}]\ |\ m_i\geqslant 0,\ \sum_{i=0}^{n}m_i=m \}.
\end{equation*}
Then there are two maps induced by the functors $Res_{i,m}$ and $Ind_{i,m}$,
\begin{align*}
[Res_{i,m}]:\ &[\mathscr{C}_m]\longrightarrow [\mathscr{C}_{m-1}]\cup \{0\},\\
[Ind_{i,m}]:\ &[\mathscr{C}_{m-1}]\longrightarrow [\mathscr{C}_{m}].
\end{align*}
For $1\leqslant i\leqslant n$, we define two functors $\tilde{\mathscr{E}}_{i,m},\ \tilde{\mathscr{F}}_{i,m}: \mathscr{C}_m\longrightarrow\mathscr{C}_m$
\begin{align*}
\tilde{\mathscr{E}}_{i,m}&=Res_{i,m+1}\comp Ind_{i-1,m+1},\\
\tilde{\mathscr{F}}_{i,m}&=Res_{i-1,m+1}\comp Ind_{i,m+1}.
\end{align*}
Let $\tilde{e}_{i,m}$ and $\tilde{f}_{i,m}$ be the maps induced by $\tilde{\mathscr{E}}_{i,m}$ and $\tilde{\mathscr{E}}_{i,m}$ respectively, i.e.
\begin{align*}
\tilde{e}_{i,m}=[\tilde{\mathscr{E}}_{i,m}]: &[\mathscr{C}_m]\longrightarrow [\mathscr{C}_{m}]\cup \{0\},\\
\tilde{f}_{i,m}=[\tilde{\mathscr{F}}_{i,m}]: &[\mathscr{C}_m]\longrightarrow [\mathscr{C}_{m}]\cup \{0\}.
\end{align*}
By Theorem~\ref{reduction},~\ref{induction} and~\ref{resind0}, we have the following proposition.
\begin{proposition}~\label{operator}
For $1\leqslant i\leqslant n$ and $m_0, \cdots, m_n\in\mathbb N$ with $m=\sum_{i=0}^{n}m_i$,
\begin{align*}
\tilde{e}_{i,m}([W^m_{m_0,\cdots,m_n}])&=
\begin{cases}
[W^m_{m_0,\cdots,m_{i-1}+1,m_i-1,\cdots,m_n}] & \text{if}\ m_{i}>0\\
0 & \text{if}\ m_{i}=0.
\end{cases}\\
\tilde{f}_{i,m}([W^m_{m_0,\cdots,m_n}])&=
\begin{cases}
[W^m_{m_0,\cdots,m_{i-1}-1,m_{i}+1,\cdots,m_n}] & \text{if}\ m_{i-1}>0\\
0 & \text{if}\ m_{i-1}=0.
\end{cases}
\end{align*}
\end{proposition}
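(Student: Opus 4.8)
The plan is to compute the two maps $\tilde{e}_{i,m}$ and $\tilde{f}_{i,m}$ directly from their definitions as composites of induction and restriction functors, evaluated on the standard irreducible module $W^m_{m_0,\dots,m_n}$, and then read off the induced maps on isomorphism classes using the classification of irreducibles in Theorem~\ref{semisimple}(2). Recall that $\tilde{\mathscr{E}}_{i,m}=Res_{i,m+1}\comp Ind_{i-1,m+1}$ and $\tilde{\mathscr{F}}_{i,m}=Res_{i-1,m+1}\comp Ind_{i,m+1}$. The idea is that applying $Ind$ raises the box count from $m$ to $m+1$, adding one new vertex of a prescribed color, and applying $Res$ lowers it back to $m$, deleting one vertex of another prescribed color; the net effect is to move one unit from one color slot to an adjacent one. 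The only subtlety is keeping track of which compositions give the zero object, which happens precisely when the intermediate $Res$ step hits a module whose relevant color class is empty.

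First I would treat $\tilde{e}_{i,m}$. Start from $W^m_{m_0,\dots,m_n}$, which by Theorem~\ref{semisimple}(2) represents the isomorphism class with color multiplicities $(m_0,\dots,m_n)$. Applying $Ind_{i-1,m+1}$: by Theorem~\ref{induction} (for $i-1\geqslant 1$) or Theorem~\ref{resind0}(2) (for $i-1=0$), we get $W^{m+1}_{m_0,\dots,m_{i-1}+1,\dots,m_n}$, i.e.\ the color-$(i-1)$ slot goes up by one. Next apply $Res_{i,m+1}$: by Theorem~\ref{reduction} (for $i\geqslant 1$, which holds here since $1\leqslant i\leqslant n$), this is zero if the color-$i$ slot of the intermediate module is empty, and otherwise lowers that slot by one. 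Since $Ind_{i-1,m+1}$ did not touch the color-$i$ slot, the color-$i$ multiplicity of the intermediate module is still $m_i$; so $\tilde{\mathscr{E}}_{i,m}(W^m_{m_0,\dots,m_n})=0$ when $m_i=0$, and otherwise it is $W^m_{m_0,\dots,m_{i-1}+1,m_i-1,\dots,m_n}$, as claimed. The argument for $\tilde{f}_{i,m}$ is entirely parallel: $Ind_{i,m+1}$ raises the color-$i$ slot to $m_i+1$ (using Theorem~\ref{induction}, valid since $i\geqslant 1$), then $Res_{i-1,m+1}$ lowers the color-$(i-1)$ slot by one, using Theorem~\ref{reduction} when $i-1\geqslant 1$ and Theorem~\ref{resind0}(1) when $i-1=0$; this is zero exactly when $m_{i-1}=0$, and otherwise gives $W^m_{m_0,\dots,m_{i-1}-1,m_i+1,\dots,m_n}$.

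Two points deserve care. First, one must note that in computing $\tilde{e}_{i,m}$ the relevant $Res$ is $Res_{i,\bullet}$ with $i\geqslant 1$, so Theorem~\ref{reduction} always applies and we never need Theorem~\ref{resind0}(1); whereas the $Ind$ step may be $Ind_{0,\bullet}$ (when $i=1$), so Theorem~\ref{resind0}(2) is genuinely used. Symmetrically, for $\tilde{f}_{i,m}$ the $Ind$ is always $Ind_{i,\bullet}$ with $i\geqslant 1$ (Theorem~\ref{induction}), but the $Res$ may be $Res_{0,\bullet}$ (when $i=1$), invoking Theorem~\ref{resind0}(1). Second, one should observe that since the functors preserve the category $\mathscr{C}$ of finite-dimensional modules and send irreducibles to irreducibles or $0$, the induced maps $[\tilde{\mathscr{E}}_{i,m}]$, $[\tilde{\mathscr{F}}_{i,m}]$ on isomorphism classes are well defined, so that stating the conclusion at the level of $[W^m_{\dots}]$ is legitimate. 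I do not expect any real obstacle here: the proof is a bookkeeping exercise chaining together three already-proven theorems, and the main thing to get right is the case analysis at the boundary $i=1$ and the conditions under which the composite vanishes.
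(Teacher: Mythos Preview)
Your proposal is correct and follows exactly the paper's approach: the paper simply states that the proposition follows from Theorems~\ref{reduction}, \ref{induction}, and~\ref{resind0}, and your argument is precisely a careful unpacking of that citation, including the correct handling of the boundary case $i=1$.
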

We define the weight of $[W^m_{m_0,\cdots,m_n}]$ to be
\begin{equation*}
m_0\epsilon_1+m_1\epsilon_2+\cdots+m_n\epsilon_{n+1},
\end{equation*}
denoted by $wt([W^m_{m_0,\cdots,m_n}])$.
For $1\leqslant i\leqslant n$, two maps $\varepsilon_{i,m}, \varphi_{i,m}:\ [\mathscr C_{m}]\longrightarrow\mathbb N$ are defined by
\begin{align*}
\varepsilon_{i,m}([W^m_{m_0,\cdots,m_n}])&=m_i,\\
\varphi_{i,m}([W^m_{m_0,\cdots,m_n}])&=m_{i-1}.
\end{align*}
\begin{proposition}
$[\mathscr C_{m}]$ with $(wt, \tilde{e}_{i,m}, \tilde{f}_{i,m}, \varepsilon_{i,m}, \varphi_{i,m})$ is a crystal.
\end{proposition}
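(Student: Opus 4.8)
The plan is to verify the four crystal axioms (a)--(d) directly from the explicit descriptions already available, since everything on $[\mathscr{C}_m]$ has been made completely combinatorial by Proposition~\ref{operator} and the definitions of $wt$, $\varepsilon_{i,m}$, $\varphi_{i,m}$. Throughout I write $b=[W^m_{m_0,\cdots,m_n}]$, so that $\varepsilon_{i,m}(b)=m_i$ and $\varphi_{i,m}(b)=m_{i-1}$, and observe that $wt(b)(E_{ii}-E_{i+1,i+1})$ is precisely the coefficient of $\epsilon_i$ minus the coefficient of $\epsilon_{i+1}$ in $m_0\epsilon_1+\cdots+m_n\epsilon_{n+1}$, which equals $m_{i-1}-m_i$.

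First I would check axiom~(a): $\varphi_{i,m}(b)-\varepsilon_{i,m}(b)=m_{i-1}-m_i=wt(b)(E_{ii}-E_{i+1,i+1})$, which is immediate. Next, axiom~(b): when $\tilde{e}_{i,m}(b)\neq 0$, by Proposition~\ref{operator} it equals $[W^m_{m_0,\cdots,m_{i-1}+1,m_i-1,\cdots,m_n}]$, whose weight differs from $wt(b)$ by $+\epsilon_i-\epsilon_{i+1}$ (the coefficient of $\epsilon_i$ goes up by one, that of $\epsilon_{i+1}$ down by one); the statement for $\tilde{f}_{i,m}$ is symmetric. For axiom~(c), I would note that $\tilde{f}_{i,m}(b_2)=b_1$ forces $b_1=[W^m_{m_0,\cdots,m_{i-1}-1,m_i+1,\cdots,m_n}]$ with $m_{i-1}>0$ in the composition of $b_2$; applying $\tilde{e}_{i,m}$ to $b_1$ then requires the $i$-th entry of $b_1$, namely $m_i+1$, to be positive, which it is, and returns $b_2$. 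Conversely $\tilde{e}_{i,m}(b_1)=b_2$ is handled the same way. Axiom~(d) is vacuous here since $\varepsilon_{i,m}$ never takes the value $-\infty$, its range being $\mathbb{N}$.

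The only genuine bookkeeping is making sure the two directions of (c) are truly inverse and that the positivity conditions ("$m_i>0$'' for $\tilde{e}_{i,m}$, "$m_{i-1}>0$'' for $\tilde{f}_{i,m}$) match up: if $\tilde{f}_{i,m}(b_2)=b_1$ is nonzero, then $b_2$ has $(i-1)$-th entry positive, and $b_1$ is obtained by moving one unit from slot $i-1$ to slot $i$, so $b_1$ has $i$-th entry $\geqslant 1$, making $\tilde{e}_{i,m}(b_1)$ nonzero and equal to $b_2$; the reverse implication is identical with the roles of $i-1$ and $i$ swapped. I do not expect any real obstacle: the statement is essentially a repackaging of Proposition~\ref{operator} together with the definitions, and the proof is a short case check. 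If one wishes to be more conceptual, one can instead exhibit the weight-preserving bijection between $[\mathscr{C}_m]$ and the set of semistandard Young tableaux of shape $(m,0,\ldots,0)$ (equivalently, weakly increasing words of length $m$ in $\{0,1,\ldots,n\}$), sending $[W^m_{m_0,\cdots,m_n}]$ to the word with $m_j$ copies of $j$, and check that $\tilde{e}_{i,m},\tilde{f}_{i,m},\varepsilon_{i,m},\varphi_{i,m},wt$ correspond to the standard crystal operators on $B(m\epsilon_1)$ under this bijection; the crystal axioms then follow from the known crystal structure on tableaux recalled in Subsection~4.1. I would present the direct verification as the proof and remark on the tableau identification, since the latter is exactly what Subsection~4.2 is building toward anyway.
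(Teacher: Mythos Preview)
Your proposal is correct and follows exactly the approach indicated in the paper: a direct verification of the crystal axioms using the explicit formulas of Proposition~\ref{operator} together with the definitions of $wt$, $\varepsilon_{i,m}$, and $\varphi_{i,m}$. The paper simply asserts that the proposition ``can be checked directly by Proposition~\ref{operator}'' without writing out the details, so your explicit case-by-case check is precisely what is being left to the reader.
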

The proposition can be checked directly by Proposition~\ref{operator}. It remains to show
 that $[\mathscr C_m]$ is isomorphic to $B(m\epsilon_1)$ as crystals.

We consider the map
\begin{align*}
\rhoup:\ [\mathscr{C}_m]&\longrightarrow B(m\epsilon_1)\\
          [W^m_{m_0,\cdots,m_n}]&\longmapsto  \Yvcentermath1\young(0\cdots01\cdots1\cdots n\cdots n),
\end{align*}
where the number of $i$'s in the tableau is $m_i$ for $0\leqslant i\leqslant n$. Observe that $\rhoup$ is a bijection and \
it preserves the weight. We only show that $\rhoup$ commutes with the Kashiwara operator $\tilde{e}_i$, i.e.
\begin{equation}~\label{commute}
\rhoup\tilde{e}_{i,m}=\tilde{e}_i\rhoup,
\end{equation}
and the case for $\tilde{f}_i$ is similar. The proof proceeds by induction on $m$. It is trivial when $m=1$ and we assume that (\ref{commute}) holds
if $m<k$. Consider the case when $m=k$. If $m_i=0$, we have
\begin{equation*}
\tilde{e}_{i,m}([W^m_{m_0,\cdots,m_n}])=0
\end{equation*}
and it only needs to show that
\begin{equation}~\label{0}
\tilde{e}_i(\Yvcentermath1\young(pq\cdots r))=0
\end{equation}
where $\Yvcentermath1\young(pq\cdots r)$ denotes
the image of $[W^m_{m_0,\cdots,m_n}]$ under $\rhoup$ with $0\leqslant p\leqslant q\leqslant\cdots\leqslant r\leqslant n$.
Through the Middle-Eastern reading, (\ref{0}) is equivalent to the equality
$\tilde{e}_i(\Yvcentermath1\young(r)\otimes\cdots\otimes\young(q)\otimes\young(p))=0$,
or
\begin{equation}~\label{6}
\tilde{e}_i(\Yvcentermath1\young(q\cdots r)\otimes\young(p))=0.
\end{equation}
Since there is no $i$ in the tableau $\Yvcentermath1\young(pq\cdots r)$,
we have $p\neq i$ and hence
\begin{equation*}
\varphi_i(\Yvcentermath1\young(q\cdots r))\geqslant\varepsilon_i(\Yvcentermath1\young(p))=0.
\end{equation*}
It implies that $\tilde{e}_i(\Yvcentermath1\young(q\cdots r)\otimes\young(p))=\tilde{e}_i(\Yvcentermath1\young(q\cdots r))\otimes\young(p)$
and (\ref{6}) follows by the assumption.

Next, Suppose $m_i\geq 0$. In order to prove (\ref{commute}), we only need to show $\tilde{e}_i$ acts on $\Yvcentermath1\young(pq\cdots r)$ by
changing the leftmost $i$ to $i-1$. Similarly, we consider the image of $\Yvcentermath1\young(q\cdots r)\otimes\young(p)$ under $\tilde{e}_i$.
If $p=i$, one has $q\geqslant p=i$ and by assumption,
\begin{equation*}
\varphi_i(\Yvcentermath1\young(q\cdots r))=\varphi_{i,m}([W^{m-1}_{0,\cdots, 0, m_i-1, \cdots, m_n}])=0<\varepsilon_i(\Yvcentermath1\young(p))=1.
\end{equation*}
Hence $\tilde{e}_i(\Yvcentermath1\young(q\cdots r)\otimes\young(p))=\Yvcentermath1\young(q\cdots r)\otimes\tilde{e}_i(\young(i))$. If $p\neq i$, then
$p<i$. It implies
\begin{equation*}
\varphi_i(\Yvcentermath1\young(q\cdots r))\geqslant 0=\varepsilon_i(\Yvcentermath1\young(p)),
\end{equation*}
and then $\tilde{e}_i(\Yvcentermath1\young(q\cdots r)\otimes\young(p))=\tilde{e}_i(\Yvcentermath1\young(q\cdots r))\otimes\young(p)$ which completes
the proof by the assumption.
\begin{theorem}
$[\mathscr{C}_m]\cong B(m\epsilon_1)$ as crystals.
\end{theorem}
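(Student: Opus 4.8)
The plan is simply to verify that the bijection $\rhoup\colon [\mathscr C_m]\to B(m\epsilon_1)$ constructed above is an isomorphism of crystals, which amounts to checking compatibility with each of the five structure maps. Compatibility with $wt$ was already observed, and compatibility with the Kashiwara operators $\tilde e_i$ is precisely the relation~(\ref{commute}), established by the preceding induction on $m$. The analogous statement for $\tilde f_i$, asserted there to be ``similar'', can either be obtained by repeating that induction with the tensor product rule for $\tilde f_i$, or deduced formally from the $\tilde e_i$ case: since $\rhoup$ is a bijection and, in any crystal, $\tilde f_i b=b'\Leftrightarrow\tilde e_i b'=b$, the identity $\rhoup\,\tilde e_{i,m}=\tilde e_i\,\rhoup$ immediately forces $\rhoup\,\tilde f_{i,m}=\tilde f_i\,\rhoup$.

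So the remaining step is compatibility with $\varepsilon_i$ and $\varphi_i$. For $\varphi_i$ there is nothing further to do once the rest is in place: both $[\mathscr C_m]$ (with the explicit formulas $wt$, $\varepsilon_{i,m}=m_i$, $\varphi_{i,m}=m_{i-1}$) and $B(m\epsilon_1)$ satisfy crystal axiom (a), so a weight- and $\varepsilon_i$-preserving bijection is automatically $\varphi_i$-preserving. For $\varepsilon_i$ I would prove by induction on $m$ that the one-row semistandard tableau $\rhoup([W^m_{m_0,\dots,m_n}])$ has $\varepsilon_i$ equal to $m_i$, matching $\varepsilon_{i,m}([W^m_{m_0,\dots,m_n}])=m_i$. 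Writing the Middle-Eastern reading of this tableau as $b_1\otimes b_2$, where $b_2$ is the rightmost box, labelled by the smallest entry $p$, and $b_1$ is the reading of the tableau with one $p$-box deleted, the tensor product rule gives
\begin{equation*}
\varepsilon_i(b_1\otimes b_2)=\max\bigl\{\varepsilon_i(b_1),\ \delta_{i,p}-wt(b_1)(E_{ii}-E_{i+1,i+1})\bigr\}.
\end{equation*}
Since $b_1=\rhoup([W^{m-1}_{m_0,\dots,m_p-1,\dots,m_n}])$, the inductive hypothesis gives $\varepsilon_i(b_1)=m_i-\delta_{i,p}$, while $wt(b_1)(E_{ii}-E_{i+1,i+1})$ is read off the multiplicities; distinguishing the four cases $p>i$, $p=i$, $p=i-1$, $p<i-1$ (and using that $p$ is minimal, so $m_j=0$ for $j<p$) one checks the maximum equals $m_i$ in each case, the base case $m=1$ being $\varepsilon_i$ of a single box $=\delta_{i,p}$.

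Assembling these facts, $\rhoup$ is a weight-preserving bijection compatible with $\tilde e_i$, $\tilde f_i$, $\varepsilon_i$ and $\varphi_i$, hence an isomorphism of crystals, which is the assertion of the theorem. I expect no serious obstacle here: the substantive work — Theorems~\ref{reduction},~\ref{induction},~\ref{resind0}, Proposition~\ref{operator}, and the induction behind~(\ref{commute}) — is already done, and the only genuine care needed in what remains is keeping straight the left-biased tensor product conventions and the order of tensor factors in the Middle-Eastern reading when running the $\varepsilon_i$ induction. It is also worth noting in passing (though it is immediate) that $\tilde e_i$ sends the one-row tableau $\rhoup([W^m_{m_0,\dots,m_n}])$ to $\rhoup([W^m_{m_0,\dots,m_{i-1}+1,m_i-1,\dots,m_n}])$, since replacing the leftmost $i$ by $i-1$ in a weakly increasing row keeps it weakly increasing of the standard form, consistently with Proposition~\ref{operator}.
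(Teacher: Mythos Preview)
Your proposal is correct and follows essentially the same route as the paper: the map $\rhoup$, the weight check, and the induction establishing~(\ref{commute}) are exactly what the paper does, and your deduction of $\tilde f_i$-compatibility from axiom~(c) is a clean way to handle the ``similar'' case. The only difference is that you add a separate inductive verification of $\varepsilon_i$-compatibility, which the paper omits; this extra step is correct (your four-case analysis checks out) but unnecessary, since in both $[\mathscr C_m]$ and $B(m\epsilon_1)$ one has $\varepsilon_i(b)=\max\{j\geqslant 0:\tilde e_i^{\,j}b\neq 0\}$ (for $[\mathscr C_m]$ this is immediate from Proposition~\ref{operator}), so $\varepsilon_i$- and $\varphi_i$-compatibility already follow from compatibility with $\tilde e_i$, $\tilde f_i$.
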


\subsection{Tensor Product of Crystals}
Let $\lambda=(\lambda_1,\cdots,\lambda_k)$ be a composition of an integer $l$, i.e.
\begin{equation*}
\sum_{i=1}^{k}\lambda_i=l,
\end{equation*}
where $\lambda_i$ is a positive integer for $1\leqslant i\leqslant k$. We denote by $\mathbb CP_{\lambda}$
the algebra
\begin{equation*}
\mathbb CP_{\lambda_1}\otimes\cdots\otimes\mathbb CP_{\lambda_k},
\end{equation*}
which can be viewed as a subalgebra of $\mathbb CP_{l}$. The following result is straightforward.
\begin{proposition}~\label{tensemi}
$\mathbb CP_{\lambda}$ is semisimple and  $\mathbb CP_{\lambda}$-modules of the form
\begin{equation*}
W^{\lambda_1}_{\lambda_{10},\cdots,\lambda_{1n}}\otimes\cdots\otimes W^{\lambda_k}_{\lambda_{k0},\cdots,\lambda_{kn}}
\end{equation*}
with $\lambda_i=\sum_{j=0}^{n}\lambda_{ij}$ exhaust all finite dimensional irreducible $\mathbb CP_{\lambda}$-modules up to isomorphism.
\end{proposition}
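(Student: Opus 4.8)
The plan is to reduce the claim to the Artin--Wedderburn structure of the tensor factors $\mathbb{CP}_{\lambda_i}$, already established in Theorem~\ref{semisimple}, together with the elementary behaviour of matrix algebras and their simple modules under tensor product over $\mathbb{C}$.

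\emph{Semisimplicity.} By Theorem~\ref{semisimple} each $\mathbb{CP}_{\lambda_i}$ is a finite dimensional semisimple $\mathbb{C}$-algebra, hence, since $\mathbb{C}$ is algebraically closed,
\begin{equation*}
\mathbb{CP}_{\lambda_i}\cong\bigoplus_{[W]\in[\mathscr{C}_{\lambda_i}]}\mathrm{End}_{\mathbb C}(W).
\end{equation*}
Using that $\otimes_{\mathbb C}$ distributes over finite direct sums and that $\mathrm{End}_{\mathbb C}(U)\otimes_{\mathbb C}\mathrm{End}_{\mathbb C}(V)\cong\mathrm{End}_{\mathbb C}(U\otimes_{\mathbb C}V)$ for finite dimensional spaces, one gets from $\mathbb{CP}_\lambda=\mathbb{CP}_{\lambda_1}\otimes\cdots\otimes\mathbb{CP}_{\lambda_k}$ that
\begin{equation*}
\mathbb{CP}_\lambda\cong\bigoplus_{(T^{(1)},\cdots,T^{(k)})}\mathrm{End}_{\mathbb C}\big(W^{\lambda_1}_{T^{(1)}}\otimes\cdots\otimes W^{\lambda_k}_{T^{(k)}}\big),
\end{equation*}
the sum taken over all tuples with $T^{(i)}\in[\mathscr{C}_{\lambda_i}]$. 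Thus $\mathbb{CP}_\lambda$ is a direct sum of matrix algebras over $\mathbb{C}$, so it is semisimple.

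\emph{Classification of simples.} The Wedderburn decomposition above exhibits the complete set of irreducible $\mathbb{CP}_\lambda$-modules (up to isomorphism) as the natural modules of the matrix summands, namely $W^{\lambda_1}_{T^{(1)}}\otimes\cdots\otimes W^{\lambda_k}_{T^{(k)}}$ with $\mathbb{CP}_\lambda$ acting factorwise, each appearing exactly once; taking the representatives of the $[\mathscr{C}_{\lambda_i}]$ indexed by $(\lambda_{i0},\cdots,\lambda_{in})$ gives exactly the list in the statement. If one wishes to avoid the explicit Wedderburn isomorphism, the same conclusion can be reached directly: each $W^{\lambda_i}_{T^{(i)}}$ is $\mathbb{CP}_{\lambda_i}$-irreducible with endomorphism ring $\mathbb{C}$ by Schur's lemma, so their external tensor product is a $\mathbb{CP}_\lambda$-module with endomorphism ring $\mathbb{C}$, hence irreducible; restriction along the inclusion $\mathbb{CP}_{\lambda_i}\hookrightarrow\mathbb{CP}_\lambda$ recovers the $i$-th factor up to isomorphism, so distinct tuples yield non-isomorphic modules; and comparing $\sum_{(T^{(1)},\cdots,T^{(k)})}\big(\prod_i\dim W^{\lambda_i}_{T^{(i)}}\big)^2$ with $\dim\mathbb{CP}_\lambda=\prod_i\dim\mathbb{CP}_{\lambda_i}$ shows that nothing else can occur.

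The only non-formal ingredient is the classical fact that over an algebraically closed field a tensor product of matrix algebras is a matrix algebra, equivalently that an external tensor product of absolutely irreducible modules is again absolutely irreducible. I do not expect this to present any real difficulty, since the split semisimplicity of each factor $\mathbb{CP}_{\lambda_i}$ is guaranteed by Theorem~\ref{semisimple}; the remaining work of matching the tuples $(T^{(1)},\cdots,T^{(k)})$ with the parametrization by the $\lambda_{ij}$ in the statement is purely notational.
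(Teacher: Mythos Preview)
Your argument is correct and is precisely the kind of standard reduction the paper has in mind: the paper offers no proof at all, merely declaring the result ``straightforward'' immediately before stating it, so your Artin--Wedderburn/tensor-product-of-matrix-algebras argument is the expected way to fill this in. One small wording point: when you say restriction along $\mathbb{CP}_{\lambda_i}\hookrightarrow\mathbb{CP}_\lambda$ ``recovers the $i$-th factor up to isomorphism'', note that the restriction is a direct sum of copies of $W^{\lambda_i}_{T^{(i)}}$, so what you recover is the isomorphism class of the unique simple constituent---which suffices for distinguishing tuples.
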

Let $\mathscr C_{\lambda}$ be the category of finite dimensional $\mathbb CP_{\lambda}$-modules and let $[\mathscr C_{\lambda}]$ be the set of
all isomorphism classes of irreducible modules in $\mathscr C_{\lambda}$. By Proposition~\ref{tensemi},
\begin{equation*}
[\mathscr C_{\lambda}]=\{[W^{\lambda_1}_{\lambda_{10},\cdots,\lambda_{1n}}\otimes\cdots\otimes W^{\lambda_k}_{\lambda_{k0},\cdots,\lambda_{kn}}]
\ |\ \lambda_i=\sum_{j=0}^{n}\lambda_{ij},\ \lambda_{ij}\in\mathbb N\}.
\end{equation*}

We define suitable operators $\tilde{e}_{i,\lambda}$ and $\tilde{f}_{i,\lambda}$ on $[\mathscr C_{\lambda}]$ so that it admits a crystal structure. For
$1\leqslant i\leqslant n$, we assign a sequence of $-$'s and $+$'s to each
$[M]=[W^{\lambda_1}_{\lambda_{10},\cdots,\lambda_{1n}}\otimes\cdots\otimes W^{\lambda_k}_{\lambda_{k0},\cdots,\lambda_{kn}}]$ as the following
\begin{equation}~\label{isig0}
(\overbrace{\underbrace{-,\cdots,-}_{\lambda_{1i}},\underbrace{+,\cdots,+}_{\lambda_{1,i-1}}}^{the\ 1^{st}\ component},\cdots,
\overbrace{\underbrace{-,\cdots,-}_{\lambda_{ki}},\underbrace{+,\cdots,+}_{\lambda_{k,i-1}}}^{the\ k^{th}\ component}).
\end{equation}
Deleting all $(+,-)$-pairs in the above sequence, we obtain a new sequence of the form
 \begin{equation}~\label{isig}
(-,\cdots,-,+,\cdots,+).
\end{equation}
Take $1\leqslant s,t\leqslant k$ such that the rightmost $-$ and the leftmost $+$ in (\ref{isig}) belong to the $s^{th}$ and the $t^{th}$ component in the original
sequence (\ref{isig0}) respectively. Then define
\begin{align}~\label{rule}
\tilde{e}_{i,\lambda}([M])
&=[W^{\lambda_1}_{\lambda_{10},\cdots,\lambda_{1n}}\otimes\cdots\otimes\tilde{\mathscr E}_{i,\lambda_s}(W^{\lambda_s}_{\lambda_{s0},\cdots,\lambda_{s
n}}) \otimes\cdots\otimes W^{\lambda_k}_{\lambda_{k0},\cdots,\lambda_{kn}}],\\
\tilde{f}_{i,\lambda}([M])
&=[W^{\lambda_1}_{\lambda_{10},\cdots,\lambda_{1n}}\otimes\cdots\otimes\tilde{\mathscr F}_{i,\lambda_t}(W^{\lambda_t}_{\lambda_{t0},\cdots,\lambda_{t
n}}) \otimes\cdots\otimes W^{\lambda_k}_{\lambda_{k0},\cdots,\lambda_{kn}}].
\end{align}
Also, we define
\begin{align}
wt([M])&=\sum_{j=1}^{k}wt([W^{\lambda_j}_{\lambda_{j0},\cdots,\lambda_{jn}}]),\\
\varepsilon_{i,\lambda}([M])&=\textrm{max}\{j\geqslant 0\ |\ \tilde{e}_{i,\lambda}^j([M])\neq 0\},\\
\varphi_{i,\lambda}([M])&=\textrm{max}\{j\geqslant 0\ |\ \tilde{f}_{i,\lambda}^j([M])\neq 0\}.
\end{align}
See that there is a natural bijection between $[\mathscr C_{\lambda}]$ and the Cartesian product
$[\mathscr C_{\lambda_1}]\times\cdots\times[\mathscr C_{\lambda_k}]$. Since $[\mathscr C_{\lambda_i}]\cong B(\lambda_i\epsilon_1)$ as crystals
and the actions of $\tilde{e}_{i,\lambda}$ and $\tilde{f}_{i,\lambda}$ on $[\mathscr C_{\lambda}]$ coincide with those of $\tilde{e}_{i}$ and
$\tilde{f}_i$ on the tensor product
$[\mathscr C_{\lambda_1}]\otimes\cdots\otimes[\mathscr C_{\lambda_k}]$\cite{???}, we have the following theorem.
\begin{theorem}
For a composition $\lambda=(\lambda_1,\cdots,\lambda_k)$ of $l$, there is an isomorphism of crystals
$[\mathscr C_{\lambda}]\cong B(\lambda\epsilon_1)\otimes\cdots\otimes B(\lambda_k\epsilon_1)$.
\end{theorem}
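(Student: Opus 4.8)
The plan is to reduce the statement to the already-established facts about $[\mathscr C_m]$ together with the standard tensor-product rule for crystals. By the preceding subsection we have an isomorphism of crystals $[\mathscr C_{\lambda_j}]\cong B(\lambda_j\epsilon_1)$ for each $1\leqslant j\leqslant k$; hence it suffices to exhibit an isomorphism of crystals
\begin{equation*}
[\mathscr C_{\lambda}]\cong [\mathscr C_{\lambda_1}]\otimes\cdots\otimes[\mathscr C_{\lambda_k}].
\end{equation*}
First I would fix the underlying bijection: by Proposition~\ref{tensemi} an irreducible object of $\mathscr C_\lambda$ is, up to isomorphism, a tensor product $W^{\lambda_1}_{\lambda_{10},\cdots,\lambda_{1n}}\otimes\cdots\otimes W^{\lambda_k}_{\lambda_{k0},\cdots,\lambda_{kn}}$, so sending $[M]$ to $([W^{\lambda_1}_{\lambda_{1\ast}}],\cdots,[W^{\lambda_k}_{\lambda_{k\ast}}])$ is a well-defined bijection onto the Cartesian product that underlies the $k$-fold tensor crystal. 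The weight map matches by the very definition $wt([M])=\sum_j wt([W^{\lambda_j}_{\lambda_{j\ast}}])$, which is exactly the tensor-product weight rule iterated.

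Next I would verify that the operators $\tilde e_{i,\lambda}$, $\tilde f_{i,\lambda}$ defined via the signature rule~(\ref{isig0})–(\ref{rule}) coincide with the Kashiwara operators on the tensor product $[\mathscr C_{\lambda_1}]\otimes\cdots\otimes[\mathscr C_{\lambda_k}]$. The key observation is that under the isomorphism $[\mathscr C_{\lambda_j}]\cong B(\lambda_j\epsilon_1)$ the object $[W^{\lambda_j}_{\lambda_{j0},\cdots,\lambda_{jn}}]$ corresponds to the single-row tableau filled with $\lambda_{j0}$ zeros, $\lambda_{j1}$ ones, \ldots, $\lambda_{jn}$ copies of $n$; reading this row right-to-left (the Middle-Eastern reading) and computing the $i$-signature box by box, each box equal to $i$ contributes a $-$, each box equal to $i-1$ contributes a $+$, and all other boxes contribute nothing. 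Collecting the contributions of the $j$-th factor therefore produces exactly the block $\underbrace{-,\cdots,-}_{\lambda_{ji}}\underbrace{+,\cdots,+}_{\lambda_{j,i-1}}$ appearing in~(\ref{isig0}), because within one factor the $i$-boxes sit to the right of the $(i-1)$-boxes in left-to-right order, hence to the left of them in the reading order. Thus~(\ref{isig0}) is precisely the $i$-signature of the corresponding element of $B(\lambda_1\epsilon_1)\otimes\cdots\otimes B(\lambda_k\epsilon_1)$, and the standard cancellation of $(+,-)$-pairs identifies the factor on which $\tilde e_i$ (resp.\ $\tilde f_i$) acts: the uncancelled rightmost $-$ lies in the $s$-th factor and the leftmost $+$ in the $t$-th factor. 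Comparing with~(\ref{rule}) and recalling from Proposition~\ref{operator} that $\tilde{\mathscr E}_{i,\lambda_s}$ (resp.\ $\tilde{\mathscr F}_{i,\lambda_t}$) implements exactly the single-box move $i\mapsto i-1$ (resp.\ $i-1\mapsto i$) inside the $s$-th (resp.\ $t$-th) factor, we see the two descriptions agree; the definitions of $\varepsilon_{i,\lambda}$ and $\varphi_{i,\lambda}$ as the maximal powers of $\tilde e_{i,\lambda}$, $\tilde f_{i,\lambda}$ that survive then match the tensor rule automatically. Finally, the crystal axioms (a)–(d) hold because they hold on any tensor product of crystals, or can be checked directly from Proposition~\ref{operator}.

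The main obstacle is the bookkeeping in the signature comparison, in particular getting the left/right conventions consistent: one must be careful that the Middle-Eastern reading reverses each row, that within a single factor $W^{\lambda_j}_{\lambda_{j\ast}}$ the entries are weakly increasing left-to-right, and that the convention in~(\ref{isig0}) lists the $\lambda_{ji}$ minus-signs before the $\lambda_{j,i-1}$ plus-signs; a sign or order slip here would swap the roles of $s$ and $t$ or of $\tilde e_i$ and $\tilde f_i$. Once this is pinned down the rest is formal, and invoking the known behaviour of Kashiwara operators on tensor products (as cited) closes the argument.
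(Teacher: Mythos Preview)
Your proposal is correct and follows essentially the same route as the paper: establish the natural bijection $[\mathscr C_\lambda]\leftrightarrow[\mathscr C_{\lambda_1}]\times\cdots\times[\mathscr C_{\lambda_k}]$, invoke the already-proved isomorphisms $[\mathscr C_{\lambda_j}]\cong B(\lambda_j\epsilon_1)$, and then identify the signature rule~(\ref{isig0})--(\ref{rule}) with the standard tensor-product action of the Kashiwara operators. The only difference is that the paper dispatches the last step by citation, whereas you spell out the Middle-Eastern reading and the block-by-block $i$-signature computation; your extra care with the ordering conventions is exactly the bookkeeping the paper suppresses.
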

Furthermore, if $\lambda=(\lambda_1,\cdots,\lambda_k)$ is a partition of $l$ with length $k\leqslant n+1$, with the help of the Middle-Eastern reading, it is not hard to deduce that
the connected component of the
crystal graph $[\mathscr C_{\lambda}]$ which contains $[W^{\lambda_1}_{\lambda_1,0,\cdots,0}\otimes W^{\lambda_2}_{0,\lambda_2,0, \cdots,0}
\otimes\cdots\otimes W^{\lambda_k}_{0,\cdots,0,\lambda_k,0,\cdots,0}]$ is isomorphic to $B(\lambda)$ as crystals.

\end{document}